\theoremstyle{plain}
\newtheorem{theorem}{Theorem}
\newtheorem{proposition}[theorem]{Proposition}
\newtheorem{lemma}[theorem]{Lemma}
\newtheorem{corollary}[theorem]{Corollary}
\theoremstyle{definition}
\newtheorem{definition}{\mdseries\scshape Definition}
\theoremstyle{remark}
\newtheorem{remark}{\mdseries\scshape Remark}
\newcommand{\set}[1]{\left\{#1\right\}}
\newcommand{\abs}[1]{\left\vert#1\right\vert}
\DeclareMathOperator{\sigzero}{d}
\newcommand{\Nfp}{F}
\newcommand{\Ntc}{T}
\newcommand{\Nha}{C}
\newcommand{\Nsp}{S}
\newcommand{\Z}{\mathbb{Z}}
\newcommand{\Zp}{\mathbb{Z}_p}
\newcommand{\Qp}{\mathbb{Q}_p}
\newcommand{\oneunit}[1]{\left\langle#1\right\rangle}
\begin{document}

\title[Discrete exponentials using $p$-adic methods]{Counting fixed
  points, two-cycles, and collisions of the discrete exponential
  function using $p$-adic methods}

\author{Joshua Holden}

\thanks{The first author would like to thank the Hutchcroft Fund at
  Mount Holyoke College for support and the Department of Mathematics
  at Mount Holyoke for their hospitality during a visit in the spring
  of 2010.}

\address{Department of Mathematics,
Rose-Hulman Institute of Technology,
Terre Haute, IN 47803, USA}

\email{holden@rose-hulman.edu}

\author{Margaret M. Robinson} 

\address{Department of Mathematics, 
Mount Holyoke College,
50 College Street, South Hadley,
  MA 01075, USA}

\email{robinson@mtholyoke.edu}

\subjclass[2000]{Primary 11D88; Secondary 11A07, 11N37, 11Y16, 94A60}
\keywords{Brizolis, discrete logarithm, discrete exponential, Hensel's
  Lemma, $p$-adic interpolation, fixed points, two-cycles, collisions}

\begin{abstract}
  Brizolis asked for which primes $p$ greater than 3 does there exist
  a pair $(g,h)$ such that $h$ is a fixed point of the discrete
  exponential map with base $g$, or equivalently $h$ is a fixed
  point of the discrete logarithm with base $g$.  Zhang (1995) and
  Cobeli and Zaharescu (1999) answered with a ``yes'' for sufficiently
  large primes and gave estimates for the number of such pairs when
  $g$ and $h$ are primitive roots modulo $p$. In 2000, Campbell showed
  that the answer to Brizolis was ``yes'' for all primes.  The first
  author has extended this question to questions about counting fixed
  points, two-cycles, and collisions of the discrete exponential
  map.  In this paper, we use $p$-adic methods, primarily Hensel's
  lemma and $p$-adic interpolation, to count fixed points, two cycles,
  collisions, and solutions to related equations modulo powers of a
  prime $p$.

\end{abstract}

\maketitle

\section{Introduction}

The idea of counting fixed points of discrete exponential functions is
usually traced back to Demetrios Brizolis
(see~\cite[Paragraph~F9]{guy}), who asked whether, given a prime $p>3$,
there is always a pair $(g, x)$ such that $g$ is a primitive root
modulo $p$, $g, x \in \set{1, \dots, p-1}$, and
\begin{equation} \label{fp}
g^{x} \equiv x  \pmod{p} \enspace ?
\end{equation}
We can regard solutions to this equation as fixed points of a discrete
exponential function.  Wen-Peng Zhang (\cite{zhang}) proved that
the answer to Brizolis' question was always yes for sufficiently large
$p$, a result which was rediscovered independently by Cobeli and
Zaharescu in~\cite{cobeli-zaharescu}.  Mariana (Campbell) Levin proved
the result for all primes in~\cite{campbell-thesis}.  (See
also~\cite{levin_et_al}.)

Zhang (and independently Cobeli and Zaharescu) also provided a way of
estimating the number of pairs $(g,x)$ which satisfy the conditions above
and also have $x$ being a primitive root.  Specifically, if $N(p)$ is
the number of such pairs given a prime $p$, we have:

\begin{theorem}[Zhang, independently by Cobeli and Zaharescu]
    \label{thm1}
\begin{equation*}
\abs{N(p)  - \frac{\phi(p-1)^{2}}{p-1}} \leq
\sigzero(p-1)^{2}\sqrt{p}(1+\ln p),
\end{equation*}
where $\sigzero(p-1)$ is the number of divisors of $p-1$.
\end{theorem}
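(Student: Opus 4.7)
The plan is to prove Theorem~\ref{thm1} by a character-sum calculation. Two standard orthogonality identities drive the argument. First, the indicator function for primitive roots modulo $p$ is
\begin{equation*}
\mathbb{1}_{g\text{ is a primitive root}} = \frac{\phi(p-1)}{p-1} \sum_{d \mid p-1} \frac{\mu(d)}{\phi(d)} \sum_{\substack{\chi \pmod{p} \\ \text{ord}(\chi) = d}} \chi(g),
\end{equation*}
and second, orthogonality of Dirichlet characters mod $p$ gives
\begin{equation*}
\mathbb{1}_{g^{x} \equiv x \pmod p} = \frac{1}{p-1} \sum_{\psi \pmod{p}} \psi(g)^{x}\,\overline{\psi(x)}.
\end{equation*}

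First I would insert these three indicators into
\begin{equation*}
N(p) = \sum_{g,x=1}^{p-1} \mathbb{1}_{g\text{ prim}}\,\mathbb{1}_{x\text{ prim}}\,\mathbb{1}_{g^{x}\equiv x}
\end{equation*}
and interchange orders of summation, producing an expression indexed by a triple of characters $(\chi_1,\chi_2,\psi)$ together with divisors $d_1,d_2 \mid p-1$. The \emph{main term} comes from taking $d_1 = d_2 = 1$ and $\psi$ trivial: because $\psi$ trivial forces the double sum over $g$ and $x$ to factor into $(\sum_g \chi_1(g))(\sum_x \chi_2(x))$, everything vanishes unless $\chi_1$ and $\chi_2$ are also trivial, and the surviving term contributes exactly $\phi(p-1)^{2}/(p-1)$.

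Next I would handle the error terms by performing the inner $g$-sum
\begin{equation*}
\sum_{g=1}^{p-1} \chi_1(g)\,\psi(g)^{x} = \sum_{g=1}^{p-1} (\chi_1 \psi^{x})(g),
\end{equation*}
which vanishes unless the product character $\chi_1\psi^{x}$ is trivial and otherwise equals $p-1$. The condition $\chi_1 \psi^{x} = \chi_0$ pins $x$ to a single residue class modulo $\text{ord}(\psi)$, so each surviving contribution reduces to an incomplete multiplicative character sum
\begin{equation*}
\sum_{\substack{x=1 \\ x\equiv x_0\,(\text{mod }\text{ord}(\psi))}}^{p-1} \chi_2(x)\,\overline{\psi(x)}
\end{equation*}
of the twisted character $\chi_2\overline{\psi}$ over an arithmetic progression.

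The principal obstacle is the uniform estimation of these arithmetic-progression character sums, which is exactly the setting of the P\'olya--Vinogradov inequality and yields the $\sqrt{p}(1+\ln p)$ factor. Counting the remaining combinatorics, each divisor pair $(d_1,d_2) \mid (p-1)^2$ contributes $\phi(d_1)\phi(d_2)$ choices of primitive-order characters, but the matching $1/(\phi(d_1)\phi(d_2))$ prefactor from the M\"obius expansion cancels these so that only the number of divisor pairs is left, giving the factor $\sigzero(p-1)^{2}$. Assembling the main term and this error bound yields the inequality stated in Theorem~\ref{thm1}.
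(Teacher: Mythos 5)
A preliminary remark: the paper does not actually prove Theorem~\ref{thm1} --- it is quoted from Zhang and from Cobeli and Zaharescu --- so there is no internal proof to compare yours against. Evaluated on its own, your plan has a genuine gap in the error analysis, located in the sum over the auxiliary character $\psi$. The orthogonality prefactor $1/(p-1)$ is exactly consumed by the complete sum over $g$ (which contributes a factor $p-1$ whenever $\chi_1\psi^x$ is principal and $0$ otherwise), so after the $g$-sum collapses you are left, for \emph{each} of the up to $p-2$ nontrivial characters $\psi$, with an incomplete sum over $x$ that your method bounds by $\sqrt{p}(1+\ln p)$. Summing absolute values over $\psi$ therefore produces an error of order $p^{3/2}\ln p$ per divisor pair, which is worse than the trivial bound $N(p)\le p-1$; your final paragraph counts only the divisor pairs $(d_1,d_2)$ and the characters $\chi_1,\chi_2$, and never accounts for the $\psi$-sum. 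A second, smaller issue: when $\chi_2=\psi$ the twisted character $\chi_2\overline{\psi}$ is principal, P\'olya--Vinogradov does not apply, and the progression sum equals $(p-1)/\mathrm{ord}(\psi)$, which is large; these diagonal terms only disappear because $\sum_{d_1\mid d_2}\mu(d_1)=0$ for $d_2>1$, a cancellation that a term-by-term estimate forgoes.

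The standard route (and the one that produces exactly the stated constant $\sigzero(p-1)^2$) avoids the $\psi$-sum altogether by first eliminating $g$: writing $x=h^u$ and $g=h^v$ for a fixed primitive root $h$, the congruence $g^x\equiv x \pmod p$ becomes $vx\equiv u\pmod{p-1}$ with $\gcd(u,p-1)=1$, which has a (then unique, and automatically primitive-root) solution $g$ if and only if $\gcd(x,p-1)=1$. Hence $N(p)$ equals the number of $x\in\set{1,\dots,p-1}$ that are simultaneously primitive roots modulo $p$ and coprime to $p-1$. One character expansion for the primitive-root condition (divisors $d\mid p-1$) and one M\"obius expansion for the coprimality condition (divisors $c\mid p-1$), followed by P\'olya--Vinogradov applied to the single non-principal character sum $\sum_{y\le (p-1)/c}\chi(y)$, gives the main term $\phi(p-1)^2/(p-1)$ and an error bounded by $\sigzero(p-1)^{2}\sqrt{p}(1+\ln p)$, one factor of $\sigzero(p-1)$ for each of the two divisor sums. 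If you want to salvage your direct triple-character expansion, you must carry out the $\psi$- and $\chi_1$-sums exactly --- they re-encode precisely this bijection --- before taking any absolute values.
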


The first author, in~\cite{holden02, holden02a}
investigated the problem of counting the number of solutions to
Brizolis' conditions when $g$ and $x$ are not necessarily primitive
roots.  If $\Nfp(p)$ is the number of such pairs $(g, x)$, it was
conjectured that
\begin{equation*}
\Nfp(p) \sim (p-1)
\end{equation*}
as $p$ goes to infinity. It was proved by the first author and Pieter Moree
in~\cite[Thm.~4.9]{holden_moree} that this is true for a 
set of primes of positive relative density.  Bourgain, Konyagin, and
Shparlinski proved in~\cite{bourgain_et_al_1} that
the conjecture is true for a set of primes of relative density 1.  The
same authors proved in~\cite{bourgain_et_al_2}, that a weaker result, 
  $\Nfp(p) = O(p)$, is true for all $p$, and also that $\Nfp(p) \geq (p-1)
  - o(p)$ for all $p$.

This paper was motivated by the attempt to similarly count solutions $(g,x)$
to the equation 
\begin{equation} \label{fpe}
g^{x} \equiv x  \pmod{p^e}
\end{equation}
with $g, x \in \set{1, \dots, p^e}$, $p\nmid g$ and $p\nmid x$.  Based
on numerical evidence, we conjecture that the number of these
solutions is asymptotically equivalent to $p^{e-1}(p-1)$ as $p$ goes
to infinity, and furthermore that the number of solutions with $g
\equiv i$ modulo $p$ is asymptotically equivalent to $p^{e-1}$ for any
$i$ as $p$ goes to infinity.  We would expect that the techniques used
to prove the theorems above could also be applied to this case.


We then attempted to investigate the situation as $p$ is held fixed
and $e$ goes to infinity.  This led naturally to an examination of the
function $x \mapsto g^x$ where $g$ is fixed and $x$ ranges through the
$p$-adic integers $\Zp$, which is carried out in
Sections~\ref{interpolation} and~\ref{hensel}.  The (perhaps)
surprising discovery is what happens when we look for solutions $x$
to~\eqref{fpe} not in the set $\set{1, \dots, p^e}$ but but rather in
the ``correct'' set $\set{1, \dots, p^em}$, where $m$ is the
multiplicative order of $g$ modulo $p$.  We show in
Section~\ref{fixed} that the number of solutions in this more natural
setting is exactly what one would expect from our conjectures, with no
error term.  (In the case $e=1$, \cite{levin_et_al} observes that it
is easy to find fixed points outside the set $\set{1, \dots, p}$ but
does not explicitly count them.)  Lev Glebsky, in~\cite{glebsky},
proves a similar result to ours in the case where $m=p-1$ using a very
different method.\footnote{Our thanks to Igor Shparlinski for this reference.}

  The papers~\cite{holden02, holden02a, holden_moree} also
  investigated three related questions: the number  of
  two-cycles of the discrete exponential function, or solutions to
\begin{equation}\label{tc}
  g^{h} \equiv a \mod{p} \quad \text{and} \quad g^{a} \equiv h \mod{p} ,
\end{equation}
the number of solutions to a discrete self-power equation
\begin{equation}\label{sp}
  x^{x} \equiv c \mod{p}
\end{equation} for fixed $c$,
and the number of collisions of the discrete self-power function,
i.e., solutions to 
\begin{equation} \label{ha}
    h^{h} \equiv a^{a} \mod{p} .
\end{equation}
It was conjectured in these papers that the number of solutions
$\Ntc(p)$ to~\eqref{tc} with $1 \leq g, h, a \leq p-1$ and $h \not
\equiv a$ modulo $p$ was 
\begin{equation*}
\Ntc(p) \sim (p-1),
\end{equation*}
the number of solutions
$\Nsp(p; c)$ to~\eqref{sp} with $1 \leq x \leq p-1$ was
\begin{equation*}
\Nsp(p; c) \sim \sum_{d \mid \frac{p-1}{m}} \frac{\phi(dm)}{dm}
\end{equation*}
where $m$ is the order of $c$ modulo $p$,
 and the number of solutions $\Nha(p)$ to  \eqref{ha} with
$1 \leq h, a \leq p-1$ and $h \not \equiv a$ modulo $p$ 
was
\begin{equation*}
\Nha(p) \sim \sum_{m \mid p-1} \phi(m)\left( \sum_{d \mid \frac{p-1}{m}}
  \frac{\phi(dm)}{dm}\right)^2 
= \sum_{d \mid p-1}  \frac{J_2(d)}{d},
\end{equation*}
where $J_2 (n) =n^2 \prod_{p \mid n} (1-p^{-2})$ is Jordan's totient
function, which counts the number of pairs of positive integers all
less than or equal to $n$ that form a mutually coprime triple together
with $n$. Balog, Broughan, and Shparlinski (\cite{balog_et_al}) showed
the weaker statements that 
$\Nsp(p; c) \leq p^{1/3+o(1)}m^{2/3}$
and
$\Nsp(p; c) \leq p^{1+o(1)}m^{-1/12}$, and that $\Nha(p) \leq p^{48/25+o(1)}$.
No nontrivial theorems on $\Ntc(p)$ seem to be known up to this point,
although Glebsky and Shparlinski (\cite{glebsky_shparlinski}) prove
some relevant results when $g$ is held fixed.

In Section~\ref{twocycles}, we investigate the number of solutions to
the equations 
\begin{equation}\label{tce}
  g^{h} \equiv a \mod{p^e} \quad \text{and} \quad g^{a} \equiv h \mod{p^e} ,
\end{equation}
where $g$ is fixed and $h$ and $a$ are in $\set{1, \dots, p^em}$ with
much the same results as before.  We also indicate how to generalize
this to more equations.  (Some of these results are also proved
in~\cite{glebsky}.)  In Section~\ref{selfpower} we similarly
investigate the equation
\begin{equation}\label{spe}
  x^{x} \equiv c \mod{p^e}
\end{equation} for fixed $c$, and $x$ in $\set{1, \dots, p^e(p-1)}$,
  and in Section~\ref{collisions} we investigate the equation
\begin{equation} \label{hae}
    h^{h} \equiv a^{a} \mod{p^e}
\end{equation}
for $h$ and $a$ in  $\set{1, \dots, p^e(p-1)}$.

The use of the discrete exponential function $x \mapsto g^x
\bmod{p}$ for $g$ a primitive root is well known in cryptography; its
inverse is commonly referred to as the discrete logarithm and
computing it is one of the basic ``hard problems'' of public-key
cryptography.  (See, for example, \cite[Section~3.6]{handbook}.)
There are also uses of the function when $g$ is not a primitive root,
for example, in the Digital Signature Algorithm.  (See, e.g.,
\cite[Section~11.5]{handbook}.  Finally, a few cryptographic
algorithms involve the self-power function $x \mapsto x^x \bmod{p}$
--- notably variants of the ElGamal signature scheme, as noted
in~\cite[Note~11.71]{handbook}.  The security of these cryptographic
algorithms rely on the unpredictability of the inputs to these maps
given the outputs.  The results above and the ones in this paper go
some way toward reassuring us that these maps are in fact behaving as
if the inputs are randomly distributed given only basic facts known
about the outputs.

\section{Interpolation} \label{interpolation}

Let $g \in \Z$ be fixed and take $p$ an odd prime.  In order to count solutions to $g^x \equiv x
\pmod{p^e}$, the obvious first step would be to interpolate the
function $f(x) = g^x$, defined on $x \in \Z$, to a function on $x \in
\Zp$.  Unfortunately, this is not possible unless $g \in 1 + p\Zp$.
(See for example, \cite[Section 4.6]{gouvea}, or \cite[Section II.2]{koblitz}.)
However, if we ``twist'' the function slightly, then interpolation is
possible.

To do this, let $\mu_{p-1} \subseteq \Zp^\times$
be the set of all $(p-1)$-st roots of unity.  Then for odd prime $p$, we have the
Teichm\"uller character
$$\omega: \Zp^\times \to \mu_{p-1},$$
which is a surjective homomorphism.
It is known that $\Zp^\times$ has a canonical decomposition as $\Zp^\times \cong
\mu_{p-1} \times (1+p\Zp)$ \cite[Cor. 4.5.10]{gouvea}, and thus for $x$ in
$\Zp^\times$ we may uniquely write $x = \omega(x) \oneunit{x}$ for some
$\oneunit{x} \in 1 + p\Zp$.

\begin{proposition}[Prop. 4.6.3 of~\cite{gouvea}; see also Section II.2
  of~\cite{koblitz}]
\label{gouvea-interp}
For $p \neq 2$, let $g \in \Zp^\times$ and $x_0 \in \Z/(p-1)\Z$, and let
$$I_{x_0} = \set{x \in \Z \mid x \equiv x_0
  \pmod{p-1}} \subseteq \Z.$$
Then $$f_{x_0}(x) = \omega(g)^{x_0}\oneunit{g}^x$$ defines a function
on $\Zp$ such that $f_{x_0}(x) = g^x$ whenever $x\in I_{x_0}$.
\end{proposition}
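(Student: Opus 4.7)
The plan is to first establish that $\oneunit{g}^x$ extends to a continuous function of $x \in \Zp$, then verify the identity $f_{x_0}(x) = g^x$ on the subset $I_{x_0} \subseteq \Z$ by a direct calculation using the decomposition $g = \omega(g)\oneunit{g}$.

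For the first step, because $\oneunit{g} \in 1 + p\Zp$ and $p$ is odd, the $p$-adic logarithm $\log_p$ converges on $1 + p\Zp$ and sends it into $p\Zp$, while the $p$-adic exponential $\exp_p$ converges on $p\Zp$. Hence
$$\oneunit{g}^x := \exp_p\!\bigl(x \log_p \oneunit{g}\bigr)$$
defines a continuous function $\Zp \to \Zp^\times$. (Equivalently, if one writes $\oneunit{g} = 1 + py$, the binomial expansion $\sum_{k\ge 0} \binom{x}{k} (py)^k$ converges for all $x \in \Zp$, since $v_p((py)^k) \ge k$ while $\binom{x}{k}$ stays $p$-adically bounded.) By the usual functional equations of $\exp_p$ and $\log_p$, this definition agrees with the ordinary integer power when $x \in \Z$.

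Next, since $\omega(g) \in \mu_{p-1}$ satisfies $\omega(g)^{p-1} = 1$, the quantity $\omega(g)^{x_0}$ depends only on the residue class of $x_0$ modulo $p-1$ and lies in $\Zp^\times$. Therefore
$$f_{x_0}(x) = \omega(g)^{x_0}\,\oneunit{g}^x$$
is a well-defined continuous function of $x \in \Zp$.

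Finally, for $x \in I_{x_0}$, write $x = x_0 + (p-1)k$ with $k \in \Z$ and apply the canonical decomposition:
$$g^x = \bigl(\omega(g)\oneunit{g}\bigr)^x = \omega(g)^{x_0}\,\omega(g)^{(p-1)k}\,\oneunit{g}^x = \omega(g)^{x_0}\,\oneunit{g}^x = f_{x_0}(x),$$
since $\omega(g)^{p-1} = 1$. The only genuinely nontrivial step is the $p$-adic analytic construction of $\oneunit{g}^x$; the hypothesis $p \neq 2$ is essential there, as it is exactly what guarantees $\exp_p$ converges on the image $p\Zp$ of $\log_p$.
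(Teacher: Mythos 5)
Your proof is correct. Note that the paper itself offers no proof of this proposition --- it is quoted directly from Gouv\^ea (Prop.~4.6.3) with a citation --- so there is no in-paper argument to compare against line by line. That said, your route differs slightly from the one in the cited reference: Gouv\^ea constructs $\oneunit{g}^x$ for $x \in \Zp$ by continuity, i.e.\ as $\lim_n \oneunit{g}^{x_n}$ for integers $x_n \to x$, with well-definedness coming from the estimate $(1+p\Zp)^{p^N} \subseteq 1+p^{N+1}\Zp$ (the paper's Lemma~\ref{one-unit-lemma} is exactly the $e=1$ germ of this), whereas you define $\oneunit{g}^x = \exp(x\log\oneunit{g})$ directly. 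The two definitions agree, and your choice is arguably the more natural one in the context of this paper, since the later Hensel's-lemma arguments (e.g.\ the proof of Theorem~\ref{x0fixedpt}) expand $\oneunit{g}^x$ as exactly this exponential series to exhibit it as a restricted power series. Your closing remark correctly identifies where $p \neq 2$ enters (convergence of $\exp$ on $p\Zp$, equivalently $\log$ being an isomorphism $1+p\Zp \to p\Zp$); the density/binomial route also needs $p$ odd only in a milder form, which is why the $p=2$ case is merely ``slightly more complicated'' rather than broken, as the paper notes in its conclusion. All steps check out: $\binom{x}{k} \in \Zp$ for $x \in \Zp$ by continuity and integrality on the dense subset $\Z$, so the binomial series converges; and the verification on $I_{x_0}$ is the elementary computation $g^x = \omega(g)^{x}\oneunit{g}^x$ with $\omega(g)^x = \omega(g)^{x_0}$ forced by $\omega(g)^{p-1}=1$.
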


In fact we can push this a little further:

\begin{proposition}
  Let $m$ be any multiple of the multiplicative order of $g$ modulo
  $p$, $p \neq 2$, such that $m \mid p-1$.
  Let $g \in \Zp^\times$ and $x_0 \in \Z/m\Z$, and let
$$I_{x_0} = \set{x \in \Z \mid x \equiv x_0
  \pmod{m}} \subseteq \Z.$$
Then $$f_{x_0}(x) = \omega(g)^{x_0}\oneunit{g}^x$$ defines a function
on $\Zp$ such that $f_{x_0}(x) = g^x$ whenever $x\in I_{x_0}$.
\end{proposition}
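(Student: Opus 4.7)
The plan is to reduce the statement directly to Proposition~\ref{gouvea-interp} by recognizing that the extra generality (allowing $m$ smaller than $p-1$) costs nothing once one notices a single fact: if $m$ is a multiple of the multiplicative order of $g$ modulo $p$, then $\omega(g)^m = 1$ in $\Zp^\times$, so the expression $\omega(g)^{x_0}$ is well-defined as a function of the residue class $x_0 \in \Z/m\Z$.

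First I would verify this key fact. The canonical decomposition $\Zp^\times \cong \mu_{p-1} \times (1+p\Zp)$ is compatible with reduction modulo $p$: since $1+p\Zp$ reduces to $1$, reduction mod $p$ restricts to a group isomorphism $\mu_{p-1} \xrightarrow{\sim} (\Z/p\Z)^\times$. Under this isomorphism $\omega(g) \mapsto g \bmod p$, so the order of $\omega(g)$ in $\mu_{p-1}$ equals the multiplicative order of $g$ modulo $p$, which divides $m$ by hypothesis; hence $\omega(g)^m = 1$.

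Next I would verify that $f_{x_0}(x)=g^x$ on $I_{x_0}$. For $x \in I_{x_0}$, write $x = x_0 + km$ with $k \in \Z$. Using the decomposition $g = \omega(g)\oneunit{g}$ and the fact that $\mu_{p-1}$ and $1+p\Zp$ commute in the abelian group $\Zp^\times$, we compute
$$g^x = \omega(g)^{x_0+km}\oneunit{g}^x = \omega(g)^{x_0}(\omega(g)^m)^k\oneunit{g}^x = \omega(g)^{x_0}\oneunit{g}^x = f_{x_0}(x).$$
Finally, $\oneunit{g}^x$ is defined on all $x \in \Zp$ exactly as in Proposition~\ref{gouvea-interp}, because $\oneunit{g} \in 1+p\Zp$, so one can interpolate using either the $p$-adic binomial series or $\exp(x\log\oneunit{g})$.

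I do not anticipate a real obstacle here; this is a mild strengthening of Proposition~\ref{gouvea-interp} with the same underlying mechanism. The only subtle point to be mindful of is that $m$ must genuinely be a \emph{multiple of the order} of $g$ modulo $p$ (not merely any divisor of $p-1$) in order for $\omega(g)^{x_0}$ to be well-defined modulo $m$, and that the hypothesis $m \mid p-1$ is used implicitly when viewing $\omega(g)$ as an $m$-th root of unity inside $\mu_{p-1}$.
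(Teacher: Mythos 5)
Your proposal is correct and rests on exactly the same key fact as the paper's proof, namely that $\omega(g)^m=1$ because the order of $g$ modulo $p$ divides $m$; the paper then phrases the conclusion as ``the functions $f_{x_0}$ and $f_{x_0'}$ from Proposition~\ref{gouvea-interp} coincide when $x_0\equiv x_0'\pmod m$,'' while you verify $f_{x_0}(x)=g^x$ directly by writing $x=x_0+km$, which is the same mechanism. Your derivation of $\omega(g)^m=1$ via the compatibility of $\omega$ with reduction modulo $p$ is in fact slightly more careful than the paper's one-line justification.
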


\begin{proof}
Since $g^m=1$, $\omega(g)^m = \omega(g^m) = 1.$ 
If $x_0, x_0' \in \Z/(p-1)\Z$ and $ x_0 \equiv x_0' \pmod{m}$,
then the two functions $f_{x_0}$ and $f_{x_0'}$ given by
Proposition~\ref{gouvea-interp} are equal and agree with $g^x$ on
$I_{x_0} \cup  I_{x_0'}$.
\end{proof}

Also, as noted for $p\neq 2$  in~\cite{gouvea}, these functions fit together into a
function on $\Zp \times \Z/m\Z$ defined by $F(x_1, x_0) =
f_{x_0}(x_1)$, such that if $x \in \Z$ and $x \equiv x_0 \pmod{m}$ we
have $F(x,x) = f_{x_0}(x) = g^x.$ 
Then we have a diagram:
$$\begin{xy}
\UseTips
\xymatrix{\Zp \times \Z/m\Z \ar[r]^(.6)F \ar[d] & \Zp^\times \ar[d]\\
\Z/p^e\Z \times \Z/mZ \ar[r]^(.6){\bar{F}} & (\Z/p^e\Z)^\times}
\end{xy}$$
where the vertical arrows are the natural surjections.   
This commutes as a consequence of the following lemma:

\begin{lemma}[Cor. 4.6.2 and just below of~\cite{gouvea} or Lemma 2.2.5 of~\cite{igusa-forms} ]
\label{one-unit-lemma}
For any positive integer $k$, $(1 + p\Zp)^k \subseteq 1+pk\Zp$.
\end{lemma}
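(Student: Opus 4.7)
The plan is to prove the lemma by a direct binomial expansion followed by a $p$-adic valuation estimate on each term. I write an arbitrary element of $1+p\Zp$ as $u = 1 + pa$ with $a \in \Zp$, so that
\[
u^k \;=\; 1 + \sum_{j=1}^{k} \binom{k}{j}\, p^j a^j.
\]
To conclude $u^k \in 1 + pk\Zp$ it then suffices to verify the inequality $v_p\!\bigl(\binom{k}{j}\,p^j\bigr) \geq v_p(pk) = 1 + v_p(k)$ for each $1 \leq j \leq k$.

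The $j=1$ term is simply $kpa$, which trivially has the required valuation. For $j \geq 2$, I would use the identity $j\binom{k}{j} = k\binom{k-1}{j-1}$ to read off the valuation bound
\[
v_p\!\bigl(\tbinom{k}{j}\bigr) \;\geq\; v_p(k) - v_p(j),
\]
which gives $v_p\!\bigl(\binom{k}{j}\,p^j\bigr) \geq v_p(k) + j - v_p(j)$. The desired bound thus reduces to the elementary claim that $j - v_p(j) \geq 1$ for every positive integer $j$ and every prime $p$, and this is immediate from $p^{v_p(j)} \leq j$ together with $p \geq 2$.

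That last step is the only point with any substance — it is where the factor of $p^j$ in the binomial expansion is converted into the missing factor of $k$ needed for membership in $pk\Zp$ — but it presents no real obstacle. The argument is uniform in $p$ and handles $p = 2$ as cleanly as the odd case actually required by the rest of the paper.
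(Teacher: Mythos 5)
Your argument is correct. The one substantive step --- that each term $\binom{k}{j}p^ja^j$ of the binomial expansion lies in $pk\Zp$ --- is handled cleanly: the identity $j\binom{k}{j}=k\binom{k-1}{j-1}$ gives $v_p\bigl(\binom{k}{j}\bigr)\geq v_p(k)-v_p(j)$, and the reduction to $j-v_p(j)\geq 1$ is sound, since $2^{v_p(j)}\leq p^{v_p(j)}\leq j$ forces $v_p(j)\leq j-1$. The ultrametric inequality then finishes the containment. Note, however, that the paper offers no proof of this lemma at all; it is stated as a citation to Gouv\'ea (Cor.~4.6.2) and Igusa (Lemma~2.2.5), so there is no internal argument to compare against. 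The usual textbook route behind those citations is slightly different in organization: one first shows $(1+p\Zp)^p\subseteq 1+p^2\Zp$ by a binomial expansion of the $p$-th power alone, iterates to get $(1+p\Zp)^{p^n}\subseteq 1+p^{n+1}\Zp$, and then writes $k=p^nm$ with $p\nmid m$ to conclude, since raising to the prime-to-$p$ part $m$ preserves each filtration level. Your single expansion of the general $k$-th power, with the valuation bookkeeping absorbed into the identity $j\binom{k}{j}=k\binom{k-1}{j-1}$, proves the same statement in one pass and, as you note, is uniform in $p$, covering $p=2$ even though the rest of the paper excludes it. Either approach is acceptable; yours has the advantage of being self-contained.
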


The lemma implies that $\oneunit{g}^{p^e}
\equiv 1 \pmod{p^e}$, and therefore $\oneunit{g}^x
\equiv \oneunit{g}^{x'} \pmod{p^e}$ when $x \equiv x' \pmod{p^e}$. 
(Recall that
$\Zp/p^e\Zp$ is isomorphic to $\Z/p^e\Z$ for any $e$.)

For $p \neq 2$, if
we let $\Delta$ be
the diagonal inclusion map 
$$\Delta: \Z \to \Zp \times \Z/m\Z$$ given
by the canonical injection $\Z \hookrightarrow \Zp$ and the canonical
surjection $\Z \twoheadrightarrow \Z/m\Z$, then
the previous diagram extends nicely to:
$$\begin{xy}
  \UseTips
  \xymatrix{\ \Z\  \ar@{^{(}->}[rd]^(.4){\Delta} \ar[ddd] \\
& \Zp \times \Z/m\Z
    \ar[r]^(.6)F \ar[d] & \Zp^\times \ar[d]\\ 
& \Z/p^e\Z \times \Z/mZ \ar[r]^(.6){\bar{F}} & (\Z/p^e\Z)^\times \\
\Z/p^em\Z \ar@{<->}[ru]^(.4){\sim}_(.4){CRT} \\
}
\end{xy}$$
where the isomorphism is given by the Chinese Remainder Theorem.
Furthermore, the composition of the maps on the top line is just the
map $x \mapsto g^x$ and the composition across the bottom line is the
map $x \mapsto g^x \bmod p^e$:
$$\begin{xy}
  \UseTips
  \xymatrix{\ \Z\  \ar@{^{(}->}[rd]^(.4){\Delta} \ar[ddd] \ar@/^/@<1ex>[rrd]^{x
      \mapsto g^x}\\
& \Zp \times \Z/m\Z
    \ar[r]^(.6)F \ar[d] & \Zp^\times \ar[d]\\ 
& \Z/p^e\Z \times \Z/mZ \ar[r]^(.6){\bar{F}} & (\Z/p^e\Z)^\times  \\
\Z/p^em\Z \ar@{<->}[ru]^(.4){\sim}_(.4){CRT} \ar@/_/@<-1ex>[rru]_{x \mapsto g^x
  \bmod p^e} \\
}
\end{xy}$$
Therefore finding all solutions $(x_1, x_0)$ to $F(x_1, x_0) \equiv x_1 \pmod{p^e},$
which is the same as finding all solutions to
$f_{x_0}(x_1) \equiv x_1 \pmod{p^e}$ for all possible $x_0 \in \Z/m\Z$, will
give us 
all solutions to $g^x \equiv x \pmod{p^e}$ as $x$ ranges over
$\Z/p^em\Z$.
\section{Hensel's Lemma} \label{hensel}

\begin{definition}[Defn. III.4.2.2
  of~\cite{bourbaki}] 
  A power series $f(x_1,x_2,\dots,x_n)$ in the ring of formal power
  series $\mathbb{Z}_p[[x_1,\dots,x_n]]$ with coefficients in
  $\mathbb{Z}_p$ is called \emph{restricted} if $f(x_1,\dots,x_n)=
  \sum_{(\alpha_i)} C_{\alpha_1,\alpha_2, \cdots, \alpha_n}
  x_1^{\alpha_1}\cdots x_n^{\alpha_n}$ and for every neighborhood $V$
  of 0 in $ \Zp $ there is only a finite number of coefficients
  $C_{\alpha_1,\alpha_2, \cdots, \alpha_n}$ not belonging to $V$ (in
  other words, the family $(C_{\alpha_1,\alpha_2, \cdots, \alpha_n})$
  tends to 0 in $\Zp$).


In particular, the series in this paper are going to be such that
$C_{0,0,\dots,0} \in \mathbb{Z}_p$ and $C_{\alpha_1,\alpha_2,
  \dots,\alpha_n} \in p^{\alpha_1+\alpha_2+
   \cdots+ \alpha_n-1}\mathbb{Z}_p$ when $ \alpha_1+\alpha_2+ \cdots+
 \alpha_n>0.$
\end{definition}

In this section, we include two versions of Hensel's lemma.  The first
version is for $n$ restricted power series in $n$ unknowns.
\begin{proposition}[Cor. III.4.5.2
  of~\cite{bourbaki}] \label{hensel-system} Consider a collection of
  $n$ restricted power series $f_j(x_1,x_2,\dots,x_n)$ for $1 \le j\le
  n$ in $\mathbb{Z}_p[[x_1,x_2,\dots,x_n]]$. Let $(a_1,a_2,\dots,a_n)$ be
  a vector in $\mathbb{Z}_p^n$ such that the determinant of the
  Jacobian matrix at $(a_1,a_2,\dots,a_n)$
$$ \left|  {\partial(f_1,f_2,\dots,f_n) \over \partial(x_1,x_2, \dots,x_n)}  
(a_1,a_2,\dots,a_n) \right|$$
is in $\mathbb{Z}_p^\times$ and $f_j(a_1,a_2,\dots,a_n) 
\equiv 0 \pmod p$ for $1\le j \le n$. Then there exists a unique
$(x_1, x_2,\dots,x_n) \in \mathbb{Z}_p^n$ for which $x_i \equiv a_i \pmod p$ 
for $1\le i \le n$ and $f_j(x_1,x_2,\dots,x_n) =0$ in $\mathbb{Z}_p$ for $1 \le j \le n$.
\end{proposition}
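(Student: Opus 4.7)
The plan is to carry out a multivariate Newton iteration, exactly as in the classical one-variable Hensel's lemma, and to use the restricted power series machinery to ensure all the quantities involved make sense $p$-adically.

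First, I would set $\mathbf{a}^{(0)} = (a_1,\dots,a_n)$ and define inductively
\[
\mathbf{a}^{(k+1)} = \mathbf{a}^{(k)} - J\bigl(\mathbf{a}^{(k)}\bigr)^{-1}\mathbf{f}\bigl(\mathbf{a}^{(k)}\bigr),
\]
where $\mathbf{f}=(f_1,\dots,f_n)$ and $J$ denotes the Jacobian matrix of $\mathbf{f}$. Because each $f_j$ is restricted, its partial derivatives are also restricted, and so both $\mathbf{f}$ and $J$ define continuous functions $\Zp^n\to\Zp^n$ and $\Zp^n\to M_n(\Zp)$ respectively. To see the iteration is well-defined I would show, by induction on $k$, that $\mathbf{a}^{(k)}\equiv\mathbf{a}^{(0)}\pmod{p}$; by continuity this forces $J(\mathbf{a}^{(k)})\equiv J(\mathbf{a}^{(0)})\pmod{p}$, so its determinant remains a unit in $\Zp$ and the inverse exists with entries in $\Zp$.

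Next, I would establish quadratic convergence, namely $\mathbf{a}^{(k+1)}\equiv\mathbf{a}^{(k)}\pmod{p^{2^{k}}}$. Writing $\mathbf{h}=\mathbf{a}^{(k+1)}-\mathbf{a}^{(k)}=-J(\mathbf{a}^{(k)})^{-1}\mathbf{f}(\mathbf{a}^{(k)})$, the $p$-adic Taylor expansion of the restricted series $f_j$ around $\mathbf{a}^{(k)}$ gives
\[
f_j\bigl(\mathbf{a}^{(k+1)}\bigr) = f_j\bigl(\mathbf{a}^{(k)}\bigr) + \sum_i \frac{\partial f_j}{\partial x_i}\bigl(\mathbf{a}^{(k)}\bigr) h_i + R_j(\mathbf{h}),
\]
where $R_j$ collects the terms of total degree $\ge 2$ in $\mathbf{h}$. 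By construction the linear part cancels the constant part, so $v_p(f_j(\mathbf{a}^{(k+1)}))\ge 2\,v_p(\mathbf{h})$, and since $J(\mathbf{a}^{(k+1)})^{-1}$ has entries in $\Zp$ this doubling propagates to the next step. This is where the main technical work lies: I need the restrictedness hypothesis on the coefficients $C_{\alpha}$ to guarantee that the degree-$\ge 2$ remainder $R_j(\mathbf{h})$ has valuation at least $2v_p(\mathbf{h})$ uniformly, so that the argument does not lose any powers of $p$ at infinity. With the standing hypothesis $C_{\alpha}\in p^{|\alpha|-1}\Zp$ for $|\alpha|>0$ this is immediate, and in general it follows from the convergence of the series on $\Zp^n$.

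Because $\Zp^n$ is complete and $v_p(\mathbf{a}^{(k+1)}-\mathbf{a}^{(k)})\to\infty$, the sequence $\mathbf{a}^{(k)}$ converges to some $\mathbf{x}\in\Zp^n$ with $\mathbf{x}\equiv\mathbf{a}\pmod{p}$; continuity of the $f_j$ on $\Zp^n$ then gives $\mathbf{f}(\mathbf{x})=0$. For uniqueness, suppose $\mathbf{y}\in\Zp^n$ also satisfies $\mathbf{f}(\mathbf{y})=0$ and $\mathbf{y}\equiv\mathbf{a}\pmod{p}$. Apply Taylor around $\mathbf{x}$ to the vector $\mathbf{y}-\mathbf{x}$: the constant terms vanish and the linear part is $J(\mathbf{x})(\mathbf{y}-\mathbf{x})$; since $\det J(\mathbf{x})\in\Zp^\times$ and the remainder is quadratic in $\mathbf{y}-\mathbf{x}$, one gets $v_p(\mathbf{y}-\mathbf{x})\ge 2\,v_p(\mathbf{y}-\mathbf{x})$, forcing $\mathbf{y}=\mathbf{x}$. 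The main obstacle throughout is book-keeping the Taylor remainder in several variables for \emph{power series} rather than polynomials; the restrictedness hypothesis is exactly what is needed to make this book-keeping go through.
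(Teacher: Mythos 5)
The paper does not prove this proposition at all: it is quoted verbatim as Corollary III.4.5.2 of Bourbaki's \emph{Commutative Algebra} and used as a black box, so there is no internal proof to compare yours against. Your Newton-iteration argument is, however, a correct and essentially complete proof of the statement, and it is in the same spirit as the successive-approximation argument by which Bourbaki derives this corollary from the general Hensel lemma for complete filtered rings. The key points all check out: the increments $\mathbf{h}^{(k)}$ have valuation at least $v_p(\mathbf{f}(\mathbf{a}^{(k)}))$ because $\det J$ stays a unit (the partials of a restricted series are restricted, hence continuous, so $J(\mathbf{a}^{(k)})\equiv J(\mathbf{a})\pmod p$); the rearranged series $f_j(\mathbf{a}^{(k)}+\mathbf{y})$ is again restricted with coefficients in $\Zp$, so the degree-$\ge 2$ remainder evaluated at $\mathbf{h}$ has valuation at least $2\min_i v_p(h_i)$, which gives the doubling $v_p(\mathbf{h}^{(k)})\ge 2^k$ starting from the hypothesis $\mathbf{f}(\mathbf{a})\equiv 0\pmod p$; and the uniqueness argument via $v\ge 2v$ is sound. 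The one point worth making explicit if you write this up carefully is the integrality of the Taylor coefficients of the rearranged series (i.e., that the Hasse-type derivatives $D^{(\beta)}f_j(\mathbf{a})$ lie in $\Zp$ for a general restricted series, not just one satisfying the paper's standing hypothesis $C_\alpha\in p^{|\alpha|-1}\Zp$); this follows from the fact that the rearrangement only involves binomial coefficients and convergent sums of $\Zp$-elements, but it is the load-bearing step you correctly flag as the main technical work.
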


As a corollary we get a generalization of one of the standard
formulations of Hensel's Lemma to the case of restricted power series.
\begin{corollary} \label{hensel-one}
Let
  $f(x)$ be a restricted power series in
  $\mathbb{Z}_p[[x]]$ and ${a}$ be in $\Zp$ such that
  $\frac{df}{dx}(a)$ is in $\Zp^\times$ and $f(a) \equiv 0 \pmod p$.
  Then there exists a unique
$x \in \mathbb{Z}_p$ for which $x \equiv a \pmod p$ 
and $f(x)=0$ in $\mathbb{Z}_p$.
\end{corollary}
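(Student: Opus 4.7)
The plan is to specialize Proposition~\ref{hensel-system} to the case $n=1$ and check that the hypotheses translate directly. Take the collection of restricted power series in the proposition to consist of the single series $f_1 = f \in \Zp[[x]]$, and take the vector $(a_1) = (a) \in \Zp^1$. Then the statement of Proposition~\ref{hensel-system} becomes exactly the statement of Corollary~\ref{hensel-one}, up to checking that the hypotheses on the Jacobian and on vanishing mod $p$ are the right ones.

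For the Jacobian condition, observe that with $n=1$ the Jacobian matrix $\partial(f_1)/\partial(x_1)$ is the $1\times 1$ matrix whose entry is $df/dx$, so its determinant evaluated at $a$ is simply $\frac{df}{dx}(a)$. The hypothesis that this determinant lies in $\Zp^\times$ is therefore exactly the hypothesis given in the corollary. Similarly, the single congruence $f_1(a_1) \equiv 0 \pmod p$ becomes $f(a) \equiv 0 \pmod p$, again matching. Proposition~\ref{hensel-system} then produces a unique $x \in \Zp$ with $x \equiv a \pmod p$ and $f(x) = 0$, which is the desired conclusion.

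The only potential obstacle is purely formal: one should confirm that a single restricted power series in $\Zp[[x]]$ does in fact qualify as ``a collection of $n$ restricted power series in $n$ unknowns'' when $n = 1$. This is immediate from the definition given just above Proposition~\ref{hensel-system}, so no actual work is required. Thus the corollary is literally the $n=1$ specialization of the proposition, and the ``proof'' consists of little more than pointing this out.
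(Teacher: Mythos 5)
Your proposal is correct and matches the paper exactly: the paper gives no separate proof, presenting the statement as the immediate $n=1$ specialization of Proposition~\ref{hensel-system}, which is precisely the argument you spell out. The verification that the $1\times 1$ Jacobian determinant is just $\frac{df}{dx}(a)$ is the only content, and you have it.
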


In our discussion of collisions below we will also need a ``lifting
lemma'' for restricted power series of more than one variable which
will allow us to count solutions modulo higher powers of $p$ if we
know the number of solutions modulo $p$.  The following proposition,
which the second author learned from Igusa's 1986 ``Automorphic
Forms'' class at Johns Hopkins, is a generalization of the version of
Hensel's Lemma in Lemma~III.2.5 of~\cite{igusa-forms} to the case of
restricted power series, with explicit counting of the fibers.

\begin{proposition} \label{hensel-count} Let $f(x_1, x_2,\dots,x_n)$
  be a restricted power series in $\mathbb{Z}_p[[x_1,\dots,x_n]]$.
  Let $$N_e= \{ {\bf \bar{a}} \in (\Zp/p^e\Zp)^n \mid {\partial f
    \over \partial x_i} ({\bf{a}}) \in \Zp^\times \mbox { for some } 1
  \le i \le n \mbox{ and } f({\bf a}) \equiv 0 \pmod {p^e} \}$$ for
  $e>0$, where $\mathbf{\bar{a}}$ indicates reduction of $\mathbf{a}$
  to the appropriate residue class. Then $\psi: N_{e+1} \to N_e$ is a
  well-defined canonical surjection with the cardinality of the fiber
  equal to $p^{n-1}$.

  In particular, a point ${\bf \bar{a}}=(a_1, a_2,\dots,a_n) \in N_e$
  can be lifted in $p^{n-1}$ different ways to a point ${\bf \bar{b}}=(b_1,
  b_2,\dots,b_n) \in N_{e+1}$ such that $b_i \equiv a_i \pmod {p^e}$
  for $ 1 \le i \le n$, so that the relationship between the
  cardinalities of the sets is: $|N_{e+1}|= p^{n-1} |N_e|$ for $e>0$.
\end{proposition}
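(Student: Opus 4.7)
The plan is to argue by Taylor expansion, reducing the lifting problem modulo $p^{e+1}$ to a single linear congruence modulo $p$ in $n$ unknowns. The well-definedness of $\psi$ is immediate: a congruence modulo $p^{e+1}$ implies the same congruence modulo $p^e$, and the unit-partial-derivative condition only involves reduction modulo $p$, so it descends automatically under $\psi$.

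To count the fiber over a fixed $\bar{\mathbf{a}} \in N_e$, I would choose a representative $\mathbf{a} \in \Zp^n$ and parametrize the $p^n$ lifts of $\bar{\mathbf{a}}$ to $(\Zp/p^{e+1}\Zp)^n$ as $\mathbf{b} = \mathbf{a} + p^e \mathbf{t}$ for $\mathbf{t}$ ranging over $(\Z/p\Z)^n$. The key computation is the expansion
$$f(\mathbf{a} + p^e \mathbf{t}) \equiv f(\mathbf{a}) + p^e \sum_{j=1}^{n} t_j \, \frac{\partial f}{\partial x_j}(\mathbf{a}) \pmod{p^{e+1}},$$
which holds for restricted $f$ because every term of order $|\alpha| \geq 2$ in $\mathbf{t}$ carries a factor of $p^{e|\alpha|}$ and therefore has $p$-adic valuation at least $2e \geq e+1$; convergence of the resulting infinite sum is guaranteed by restrictedness, and the divided-derivative coefficients that appear have entries in $\Zp$ since $f$ does. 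The condition $f(\mathbf{b}) \equiv 0 \pmod{p^{e+1}}$ then becomes, after dividing through by $p^e$ (legitimate because $\bar{\mathbf{a}} \in N_e$),
$$\frac{f(\mathbf{a})}{p^e} + \sum_{j=1}^{n} t_j \, \frac{\partial f}{\partial x_j}(\mathbf{a}) \equiv 0 \pmod{p},$$
a single affine-linear equation in $\mathbf{t}$ whose coefficient vector is nonzero modulo $p$, by the unit-derivative hypothesis at $\bar{\mathbf{a}}$.

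Such an equation has exactly $p^{n-1}$ solutions in $(\Z/p\Z)^n$, each producing a $\bar{\mathbf{b}}$ that automatically lies in $N_{e+1}$: indeed $\frac{\partial f}{\partial x_j}(\mathbf{b}) \equiv \frac{\partial f}{\partial x_j}(\mathbf{a}) \pmod{p}$, so the same partial derivative remains a unit. This simultaneously yields surjectivity of $\psi$ and the fiber count $p^{n-1}$, from which the relation $|N_{e+1}| = p^{n-1}\,|N_e|$ follows by summing over $N_e$. The main technical obstacle is justifying the multivariable Taylor expansion for restricted power series over $\Zp$ and verifying that the higher-order remainder actually has valuation at least $e+1$; once this is established, the remainder of the proof is elementary linear algebra over the field $\Z/p\Z$.
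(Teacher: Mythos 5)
Your proof is correct. The paper does not actually print a proof of this proposition (it defers to Igusa's Lemma~III.2.5 and the notion of restricted power series), and your Taylor-expansion argument --- reducing the lift condition to a single nonzero affine-linear congruence over $\Z/p\Z$ with exactly $p^{n-1}$ solutions --- is precisely the standard argument behind that lemma, correctly adapted here: the valuation bound $2e \ge e+1$ on the higher-order terms, the integrality of the divided derivatives, and the persistence of the unit-derivative condition modulo $p$ are exactly the points that need checking, and you identify all of them.
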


\section{Fixed Points} \label{fixed}

\begin{theorem} \label{x0fixedpt} For $p \neq 2$, let $g \in \Zp^\times$ be fixed and let $m$ be the
  multiplicative order of $g$ modulo $p$.
  Then for every $x_0 \in \Z/m\Z$, there is exactly one
  solution to the equation
$$\omega(g)^{x_0} \oneunit{g}^x = x $$
for $x \in \Zp$.
\end{theorem}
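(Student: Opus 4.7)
The plan is to apply Hensel's Lemma in the form of Corollary~\ref{hensel-one} to the function $f(x) = \omega(g)^{x_0}\oneunit{g}^x - x$. First I need to verify that $f$ lies in the ring of restricted power series in $\Zp[[x]]$. Since $\oneunit{g} \in 1+p\Zp$ and $p$ is odd, the $p$-adic logarithm $\alpha = \log\oneunit{g}$ is a well-defined element of $p\Zp$, and $\oneunit{g}^x = \exp(x\alpha) = \sum_{k\ge 0}(\alpha^k/k!)\,x^k$. Using $v_p(\alpha) \ge 1$ and the standard bound $v_p(k!) \le (k-1)/(p-1)$ gives $v_p(\alpha^k/k!) \ge k - (k-1)/(p-1)$, which tends to $\infty$, so $\oneunit{g}^x$ is restricted; multiplying by the unit $\omega(g)^{x_0} \in \Zp^\times$ and subtracting the monomial $x$ preserves restrictedness.

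Next I would reduce $f$ modulo $p$. Every coefficient of $\oneunit{g}^x$ other than the constant $1$ lies in $p\Zp$, so $\oneunit{g}^x \equiv 1 \pmod p$ for all $x \in \Zp$; combined with $\omega(g) \equiv g \pmod p$ this yields $f(x) \equiv g^{x_0} - x \pmod p$. Because $g$ has order $m$ modulo $p$, the residue $g^{x_0} \bmod p$ depends only on $x_0 \in \Z/m\Z$; let $a \in \Zp$ be any lift of $g^{x_0} \bmod p$. Then $f(a) \equiv 0 \pmod p$. For the derivative hypothesis of Hensel, differentiating the restricted series termwise gives $f'(x) = \omega(g)^{x_0}\,\alpha\,\oneunit{g}^x - 1$; since $\alpha \in p\Zp$, the first term lies in $p\Zp$, so $f'(a) \equiv -1 \pmod p$ and thus $f'(a) \in \Zp^\times$.

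Corollary~\ref{hensel-one} then delivers a unique $x \in \Zp$ with $x \equiv a \pmod p$ and $f(x) = 0$. To upgrade uniqueness from this one residue class to all of $\Zp$, observe that the mod-$p$ reduction $f(x) \equiv g^{x_0} - x$ forces any solution to satisfy $x \equiv g^{x_0} \equiv a \pmod p$, so Hensel's Lemma already sees every candidate. The main potential obstacle is the bookkeeping needed to confirm that $f$ is a restricted power series and that its formal derivative behaves as expected under the $p$-adic topology; once those estimates on $v_p(\alpha^k/k!)$ are in hand, everything else is automatic.
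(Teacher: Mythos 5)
Your proposal is correct and follows essentially the same route as the paper: reduce modulo $p$ using $\oneunit{g}^x \equiv 1 \pmod p$ to find the unique residue $a \equiv \omega(g)^{x_0}$, observe that $\exp(x\log\oneunit{g})$ gives a restricted power series, and apply Corollary~\ref{hensel-one}. You actually supply more detail than the paper does (the valuation estimate $v_p(\alpha^k/k!) \ge k - (k-1)/(p-1)$, the explicit check that $f'(a) \equiv -1 \pmod p$, and the remark that the mod-$p$ reduction forces every solution into the single residue class, so uniqueness is global), all of which is sound.
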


\begin{proof} We start by finding solutions modulo $p$.  We know that
  $\oneunit{g} \equiv 1 \pmod{p}$, so the equation reduces to 
$$\omega(g)^{x_0} \equiv x \pmod{p}.$$
For fixed $g$ and $x_0$, this obviously has exactly one solution.

Since we know that
  $\oneunit{g}$ is in $1+p\Zp$, we have
  that
\begin{eqnarray*}
\oneunit{g}^{x}=\exp(x \log(\oneunit{g}))=1&+&x\log(\oneunit{g})+
x^2\log( \oneunit{g})^2/2! \\
&+& \mbox{higher order terms in powers of }
\log(\oneunit{g})
\end{eqnarray*}
where from the definition of the $p$-adic logarithm we know that 
$\log(\oneunit{g}) \in p\Zp$.  Therefore we have a restricted
power series and we can apply Corollary~\ref{hensel-one}, which gives us a unique solution in $\Zp$.
\end{proof}

\begin{corollary} \label{crtfixedpt} For $p \neq 2$, let $g \in \Z$ be fixed such that $p \nmid g$ and let $m$ be the
  multiplicative order of $g$ modulo $p$.
  Then there are exactly $m$ solutions to the congruence
\begin{equation}
g^x \equiv x \pmod{p^e} \tag{\ref{fpe}, recalled}
\end{equation}
for $x \in \{1,2, \ldots, p^em \}$.  Furthermore, these solutions are
all distinct modulo $p^e$ and all distinct modulo $m$.
\end{corollary}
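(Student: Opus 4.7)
The plan is to pull back the congruence~\eqref{fpe} along the commutative diagram at the end of Section~\ref{interpolation}, use Theorem~\ref{x0fixedpt} fiber-by-fiber to count the solutions, and then dispose of the two distinctness claims by a short root-of-unity argument.

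The first step is to observe that, since $m \mid p-1$ and hence $\gcd(p^e, m)=1$, the Chinese Remainder Theorem identifies $\Z/p^em\Z$ with $\Z/p^e\Z \times \Z/m\Z$. Reading off the bottom of the diagram from Section~\ref{interpolation}, counting $x \in \{1,\ldots,p^em\}$ with $g^x \equiv x \pmod{p^e}$ is equivalent to counting pairs $(x_1, x_0)\in \Z/p^e\Z \times \Z/m\Z$ with $f_{x_0}(x_1) \equiv x_1 \pmod{p^e}$, so it suffices to enumerate those pairs.

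Next I fix $x_0 \in \Z/m\Z$ and show there is exactly one admissible $x_1 \in \Z/p^e\Z$. Theorem~\ref{x0fixedpt} produces a unique $\tilde{x} \in \Zp$ with $f_{x_0}(\tilde{x}) = \tilde{x}$, whose reduction modulo $p^e$ is a valid $x_1$. For the converse, set $h(x) = f_{x_0}(x) - x$; the expansion in the proof of Theorem~\ref{x0fixedpt} shows $h$ is a restricted power series, and $h'(x) \equiv -1 \pmod{p}$ because $\log \oneunit{g} \in p\Zp$. Corollary~\ref{hensel-one} therefore lifts any $x_1$ with $h(x_1) \equiv 0 \pmod{p^e}$ to a $\Zp$-zero of $h$, which by the uniqueness in Theorem~\ref{x0fixedpt} must be $\tilde{x}$. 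This yields exactly $m$ valid pairs $(x_1,x_0)$, hence $m$ solutions $x \in \{1, \ldots, p^em\}$.

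Finally, distinctness modulo $m$ is automatic, since the $m$ solutions correspond one per residue class $x_0 \in \Z/m\Z$ via the CRT. For distinctness modulo $p^e$, suppose the solutions associated with two distinct $x_0, x_0' \in \Z/m\Z$ shared the same image $x_1 \in \Z/p^e\Z$; cancelling the unit $\oneunit{g}^{x_1}$ from $\omega(g)^{x_0} \oneunit{g}^{x_1} \equiv \omega(g)^{x_0'} \oneunit{g}^{x_1} \pmod{p}$ would leave $\omega(g)^{x_0} \equiv \omega(g)^{x_0'} \pmod{p}$. Since distinct $(p-1)$-st roots of unity in $\Zp$ remain distinct modulo $p$ (Hensel applied to $x^{p-1}-1$), this is in fact an equality in $\mu_{p-1}$, and since $\omega(g)$ has order equal to the order $m$ of $g$ modulo $p$, we conclude $x_0 \equiv x_0' \pmod{m}$---a contradiction. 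The only real step beyond the initial setup is this root-of-unity cancellation; the rest is bookkeeping on top of Theorem~\ref{x0fixedpt}.
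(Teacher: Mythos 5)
Your proposal takes essentially the same route as the paper: apply Theorem~\ref{x0fixedpt} once for each residue $x_0 \in \Z/m\Z$ and glue the resulting $x_1 \in \Z/p^e\Z$ to $x_0$ via the Chinese Remainder Theorem. You are in fact more thorough than the paper on the distinctness claims: the paper does not argue them at all, whereas your observation that the solution attached to $x_0$ is congruent to $\omega(g)^{x_0} \bmod p$, and that $x_0 \mapsto \omega(g)^{x_0} \bmod p$ is injective on $\Z/m\Z$ because reduction is injective on $\mu_{p-1}$ and $\omega(g)$ has order $m$, cleanly gives distinctness modulo $p$ (hence modulo $p^e$). One soft spot: in arguing that each $x_0$ admits exactly one $x_1$ modulo $p^e$, you lift a residue $x_1$ with $h(x_1)\equiv 0 \pmod{p^e}$ via Corollary~\ref{hensel-one} and identify the lift with $\tilde{x}$; but Corollary~\ref{hensel-one} only guarantees the lift agrees with $x_1$ modulo $p$, so this shows $x_1 \equiv \tilde{x} \pmod{p}$, not modulo $p^e$. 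To close this you should either invoke Proposition~\ref{hensel-count} with $n=1$ (which gives $|N_e| = |N_1| = 1$ directly), or note that for $x_1 = \tilde{x} + p^k u$ with $u$ a unit and $k < e$ one has $h(x_1) = p^k u\, h'(\tilde{x}) + O(p^{2k})$ with $h'(\tilde{x})$ a unit, so $h(x_1) \not\equiv 0 \pmod{p^e}$. The paper's own proof silently assumes this same uniqueness, so your version is, if anything, closer to complete.
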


\begin{proof}
Theorem~\ref{x0fixedpt} implies that for each choice of $x_0 \in \Z/m\Z$ there is exactly one
$x_1 \in \Z/p^e\Z$ 
with the property that
$$\omega(g)^{x_0} \oneunit{g}^{x_1} \equiv x_1 \pmod {p^e}. $$
By the Chinese Remainder Theorem, there will be exactly one  $x \in \Z/p^em\Z$ such that
$x \equiv x_0 \pmod m$ and $x \equiv x_1 \pmod {p^e}$. By the interpolation set up since $x \equiv x_0 \pmod m$, we know that
for this $x$:
$$g^x =\omega(g)^{x_0} \oneunit{g}^{x} \equiv x \pmod {p^e}. $$
Finally, since exactly one such $x$ exists for each $x_0$, we have our $m$ solutions to the congruence.

\end{proof}

\section{Two-Cycles} \label{twocycles}

\begin{definition} 
For a fixed prime  $p$ and for some $g \in \Z$, $p \nmid g$, 
the pair $(h,a) \in \set{1, \ldots, p^e(p-1)}^2$, $p \nmid h$, $p \nmid
a$ will be a 
\emph{two-cycle modulo $p^e$ associated with $g$} if 
$h \not \equiv a \pmod {p^e}$, and
\begin{equation}
  g^{h} \equiv a \mod{p^e} \quad \text{and} \quad g^{a} \equiv h
  \mod{p^e}. \tag{\ref{tce}, recalled}
\end{equation}
\end{definition}

\begin{definition}
When we count \emph{the number of two-cycles modulo $p^e$}, we will not 
distinguish between the two-cycle $(h,a)$ and the two-cycle $(a,h)$. 
Thus, we define the number of two-cycles modulo $p^e$, or $|T_e|$, as

\begin{equation*}
\begin{split}
  |T_e|= \frac{1}{2}\ \Bigl| \Bigl\{ h &\in \set{1, \dots, p^e(p-1)}, p \nmid h
  \quad \mid  \\
& h \not \equiv a \pmod {p^e},  \quad
    g^h \equiv a \pmod {p^e}, \ \mbox{ and}  \quad  g^a \equiv h \pmod {p^e} \\
    &  \quad  \text{\ for some\ } g \in (\Z/p^e\Z)^\times \text{\ and\ }
    a \in \set{1, \ldots, p^e(p-1)}, p \nmid a \Bigr\} \Bigr|.
  \end{split}
\end{equation*}
\end{definition}

\begin{proposition} \label{x0y0tc}
  For $p \not = 2$ and a fixed $g \in \Z_p^\times$, let $m$ be the
  multiplicative order of g modulo $p$. Then for every pair $(x_0, \
  y_0) \in (\Z/m\Z)^2$, there is exactly one solution to the system of
  equations
\begin{eqnarray*}
\omega(g)^{x_0} \oneunit{g}^h &=& a \\
\omega(g)^{y_0} \oneunit{g}^a &=& h 
\end{eqnarray*}
for $(h,a) \in \Zp^2$.
\end{proposition}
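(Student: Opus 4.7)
The plan is to apply the system form of Hensel's lemma (Proposition~\ref{hensel-system}) to the pair
\begin{align*}
f_1(h,a) &= \omega(g)^{x_0}\oneunit{g}^h - a,\\
f_2(h,a) &= \omega(g)^{y_0}\oneunit{g}^a - h,
\end{align*}
viewed as power series in $\Zp[[h,a]]$. As in the proof of Theorem~\ref{x0fixedpt}, I would expand $\oneunit{g}^h = \exp(h\log\oneunit{g})$ and $\oneunit{g}^a = \exp(a\log\oneunit{g})$; because $\log\oneunit{g} \in p\Zp$, the coefficients $(\log\oneunit{g})^k/k!$ tend to $0$, so each $f_j$ is a restricted power series in the sense of the definition.

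Next I would produce a solution modulo $p$. Since $\oneunit{g} \equiv 1 \pmod p$, the system reduces to $a \equiv \omega(g)^{x_0} \pmod p$ and $h \equiv \omega(g)^{y_0} \pmod p$, giving the unique mod-$p$ solution $(h_0, a_0) = (\omega(g)^{y_0}, \omega(g)^{x_0})$.

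Then I would check the Jacobian condition. A direct computation gives
\[
\det\begin{pmatrix} \omega(g)^{x_0}\oneunit{g}^h \log\oneunit{g} & -1 \\ -1 & \omega(g)^{y_0}\oneunit{g}^a \log\oneunit{g} \end{pmatrix} = \omega(g)^{x_0+y_0}\oneunit{g}^{h+a}(\log\oneunit{g})^2 - 1,
\]
and since $(\log\oneunit{g})^2 \in p^2\Zp$, this determinant is $\equiv -1 \pmod p$ for every $(h,a) \in \Zp^2$, hence a unit in $\Zp^\times$. Proposition~\ref{hensel-system} then produces a unique lift $(h,a) \in \Zp^2$ satisfying the original system.

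The only non-routine observation is structural rather than computational: both diagonal entries of the Jacobian carry a factor of $\log\oneunit{g} \in p\Zp$, so the off-diagonal $-1$ entries automatically control the determinant modulo $p$. This is what makes invertibility of the Jacobian independent of $x_0$ and $y_0$, and it is the same phenomenon that should let the argument scale to the longer systems promised in the introduction.
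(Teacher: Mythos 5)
Your proposal is correct and follows essentially the same route as the paper: both set up $f_1, f_2$ as restricted power series via $\exp(h\log\oneunit{g})$, find the unique mod-$p$ solution, observe that the diagonal entries of the Jacobian carry a factor of $\log\oneunit{g} \in p\Zp$ so the determinant is $\equiv -1 \pmod p$, and invoke Proposition~\ref{hensel-system}. (Your ordering $(h_0,a_0) = (\omega(g)^{y_0}, \omega(g)^{x_0})$ is in fact the right one; the paper's proof lists the pair in the opposite order, which appears to be a typo.)
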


\begin{proof}
We start by finding solutions modulo $p$. If we let 
\begin{eqnarray*}
f_1(h,a)&=&\omega(g)^{x_0} \oneunit{g}^h - a \\
f_2(h,a)&=&\omega(g)^{y_0} \oneunit{g}^a - h 
\end{eqnarray*}
then modulo $p$ this system reduces to
\begin{eqnarray*}
f_1(h,a)&\equiv&\omega(g)^{x_0}  - a  \pmod p\\
f_2(h,a)&\equiv&\omega(g)^{y_0}  - h \pmod p
\end{eqnarray*}
which clearly has exactly one solution $(h,a)=(\omega(g)^{x_0},\omega(g)^{y_0})$ for fixed $g$, $x_0$ and $y_0$.
The power series representations for $f_1(h,a)$ and $f_2(h,a)$ are restricted power series with
\begin{eqnarray*}
{\partial f_1 \over \partial h} &=& \omega(g)^{x_0}( \ \log(\oneunit{g}) + h \log(\oneunit{g})^2 + \cdots ) \equiv 0 \pmod p \\
{\partial f_1 \over \partial a} &=& -1 \equiv -1 \pmod p \\
{\partial f_2 \over \partial h} &=& -1 \equiv -1 \pmod p \\
{\partial f_2 \over \partial a} &=& \omega(g)^{y_0}( \ \log(\oneunit{g}) + a \log(\oneunit{g})^2 + \cdots ) \equiv 0 \pmod p.
\end{eqnarray*}

Thus the determinant of the Jacobian matrix is congruent to -1 modulo
$p$ and by Proposition~\ref{hensel-system} the unique solution modulo
$p$ to this system lifts to a unique solution $(h,a) \in \Zp^2$.
\end{proof}

\begin{proposition} \label{crt-tc} For $p \not = 2$ and a fixed $g \in
  \Z$, $p \nmid g$, let $m$ be the multiplicative order of $g$ modulo
  $p$.  Then if
\begin{equation*}
\begin{split}
  |T_{e,g}|= \frac{1}{2}\ \Bigl| \Bigl\{ h &\in \set{1, \dots, p^em}, p \nmid h
  \quad \mid \quad   h \not \equiv a \pmod {p^e},  \\ 
    & g^h \equiv a \pmod {p^e}, \ \mbox{ and}  \quad  g^a \equiv h \pmod {p^e} \\
    &  \quad  \text{\ for some\ } 
    a \in \set{1, \ldots, p^em}, p \nmid a \Bigr\} \Bigr|.
  \end{split}
\end{equation*}
is the number of two-cycles modulo $p^e$ associated with that
particular $g$,
$$
|T_{e,g}|=(m^2 -m)/2.
$$
\end{proposition}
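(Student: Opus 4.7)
The plan is to mirror the proof of Corollary~\ref{crtfixedpt}, replacing Theorem~\ref{x0fixedpt} with Proposition~\ref{x0y0tc}, and then to carefully subtract off the ``diagonal'' solutions where $h \equiv a \pmod{p^e}$. First, I will use Proposition~\ref{x0y0tc}: for each choice of $(x_0, y_0) \in (\Z/m\Z)^2$, it gives a unique $(h,a) \in \Zp^2$, which I reduce to a unique pair in $(\Z/p^e\Z)^2$. Since $m \mid p-1$ and $\gcd(p^e, m)=1$, the Chinese Remainder Theorem then produces a unique $(h,a) \in \{1,\dots,p^em\}^2$ with $h \equiv x_0, a \equiv y_0 \pmod m$ and agreeing with the interpolated solution mod $p^e$. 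The interpolation setup (exactly as in Section~\ref{interpolation}) guarantees that this $(h,a)$ actually satisfies $g^h \equiv a \pmod{p^e}$ and $g^a \equiv h \pmod{p^e}$, and $p \nmid h$, $p \nmid a$ follows from the fact that $\omega(g) \in \mu_{p-1}$ and $\oneunit{g} \in 1+p\Zp$ make the interpolated values units. Running this in reverse gives a bijection, so there are exactly $m^2$ ordered pairs $(h,a) \in \{1,\dots,p^em\}^2$ satisfying the two congruences (but not yet the condition $h \not\equiv a \pmod{p^e}$).

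The next step is to identify which of these $m^2$ pairs are ``degenerate'' in the sense $h \equiv a \pmod{p^e}$. The key observation is that if $h \equiv a \pmod{p^e}$ in such a solution, then substituting into $g^h \equiv a$ and $g^a \equiv h$ gives $g^h \equiv h$ and $g^a \equiv a \pmod{p^e}$, so both $h$ and $a$ are fixed points lying in $\{1, \dots, p^em\}$. Corollary~\ref{crtfixedpt} tells us there are exactly $m$ such fixed points and they are pairwise distinct modulo $p^e$, so $h \equiv a \pmod {p^e}$ actually forces $h=a$. I then claim these degenerate solutions correspond precisely to the diagonal pairs $x_0 = y_0$ in the parameterization: one direction is immediate ($h=a$ gives $x_0 = y_0$), and the converse uses the uniqueness in Proposition~\ref{x0y0tc}, since when $x_0 = y_0$ the system is symmetric in $(h,a)$, so both $(h,a)$ and $(a,h)$ are solutions and must coincide. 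I expect this symmetry-plus-uniqueness argument to be the most delicate step, though it is really just an application of the uniqueness already in hand.

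Having established that exactly $m$ of the $m^2$ ordered pairs are degenerate, there remain $m^2 - m$ ordered pairs $(h,a)$ with $h \not\equiv a \pmod{p^e}$ satisfying the two-cycle congruences. Since $|T_{e,g}|$ identifies $(h,a)$ with $(a,h)$ (and these two orderings are distinct precisely because $h \not\equiv a \pmod{p^e}$), dividing by $2$ yields $|T_{e,g}| = (m^2 - m)/2$, as claimed.
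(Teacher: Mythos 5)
Your proposal is correct and follows essentially the same route as the paper's proof: parameterize solutions by $(x_0,y_0)\in(\Z/m\Z)^2$ via Proposition~\ref{x0y0tc} and the Chinese Remainder Theorem to get $m^2$ ordered pairs, discard the $m$ degenerate ones with $h\equiv a\pmod{p^e}$, and divide by $2$. You simply supply more justification than the paper does for why exactly $m$ pairs are degenerate (via Corollary~\ref{crtfixedpt} and the symmetry-plus-uniqueness argument), which the paper asserts without elaboration.
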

\begin{proof}
Parallel to the proof of Corollary~\ref{crtfixedpt}, for each choice
of $(x_0, y_0)$ in $(\Z/m\Z)^2$, Proposition~\ref{x0y0tc} gives us exactly
one pair $(h,a)$ in $(\Z/p^em\Z)^2$ satisfying $ g^h \equiv a \pmod
{p^e}$ and $g^a \equiv h \pmod {p^e}$.  Thus there are $m^2$ such
pairs total, but $m$ of them correspond to the case where $h \equiv a
\pmod{p^e}$.  Dividing by 2 to account for swapping the roles of $h$
and $a$ gives us the proposition.
\end{proof}

\begin{theorem} For a given prime $p \not =2$, the number of
  two-cycles $|T_e|$ is
\begin{eqnarray*}
|T_e|= \sum_{m \mid (p-1)}\phi \left({m}\right)p^{e-1} (p-1) (m-1)/2.
\end{eqnarray*}
\end{theorem}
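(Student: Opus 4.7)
The plan is to derive $|T_e|$ from Proposition~\ref{crt-tc} by extending the range of $h$ from $\{1,\ldots,p^em\}$ to $\{1,\ldots,p^e(p-1)\}$ and then summing over $g \in (\Z/p^e\Z)^\times$, stratified by the multiplicative order $m$ of $g$ modulo $p$.

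First, for each $g \in (\Z/p^e\Z)^\times$ of order $m$ modulo $p$, Proposition~\ref{crt-tc} supplies $(m^2-m)/2$ unordered non-degenerate two-cycles, equivalently $m^2-m$ values of $h \in \{1,\ldots,p^em\}$ (each matched with its unique partner $a$) that form such a two-cycle. I would next observe that the two-cycle equations are invariant under the shift $h \mapsto h + p^e m$: by Lemma~\ref{one-unit-lemma} we have $\oneunit{g}^{p^e} \equiv 1 \pmod{p^e}$, and since $\omega(g)^m = 1$ we conclude $g^{p^e m} \equiv 1 \pmod{p^e}$. Moreover $p^e m \equiv 0 \pmod{p^e}$, so both $g^h \equiv a \pmod{p^e}$ and $g^a \equiv h \pmod{p^e}$ persist under the shift, as do the conditions $p \nmid h$ and $h \not\equiv a \pmod{p^e}$. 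Thus each of the $m^2 - m$ values of $h$ in $\{1,\ldots,p^em\}$ lifts to exactly $(p-1)/m$ values in $\{1,\ldots,p^e(p-1)\}$, yielding $(m^2-m)(p-1)/m = (m-1)(p-1)$ values of $h$ per such $g$.

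Second, I would sum over $g$. The reduction map $(\Z/p^e\Z)^\times \twoheadrightarrow (\Z/p\Z)^\times$ has fibers of uniform size $p^{e-1}$, and the cyclic group $(\Z/p\Z)^\times$ of order $p-1$ contains exactly $\phi(m)$ elements of each order $m \mid p-1$. Hence there are $\phi(m)\, p^{e-1}$ choices of $g \in (\Z/p^e\Z)^\times$ whose order modulo $p$ equals $m$. Combining with the factor $1/2$ from the definition of $|T_e|$ (which identifies the unordered pair $\{h,a\}$ with its reversal), I obtain
\begin{equation*}
|T_e| = \frac{1}{2} \sum_{m \mid p-1} \phi(m)\, p^{e-1} (m-1)(p-1) = \sum_{m \mid p-1} \phi(m)\, p^{e-1} (p-1)(m-1)/2,
\end{equation*}
as claimed.

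The main subtlety is justifying the lifting inflation factor $(p-1)/m$ rather than $\bigl((p-1)/m\bigr)^2$: once $g$ and the residue class of $h$ modulo $p^e m$ are fixed, the two-cycle equations determine $a$ uniquely modulo $p^e m$, so lifting the $a$-coordinate independently would overcount. The argument above treats $h$ as the primary variable ranging over $\{1,\ldots,p^e(p-1)\}$ with $a$ its uniquely associated partner, consistent with the form of the definition and with the analogous treatment in Corollary~\ref{crtfixedpt}.
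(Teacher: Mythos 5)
Your proposal is correct and follows essentially the same route as the paper: it invokes Proposition~\ref{crt-tc} for each fixed $g$, inflates only the $h$-coordinate by the factor $(p-1)/m$ when passing from $\{1,\ldots,p^em\}$ to $\{1,\ldots,p^e(p-1)\}$ (while keeping $a$ counted once), and then sums over $g$ stratified by the order $m$ of $g$ modulo $p$ using the count $\phi(m)p^{e-1}$. Your explicit justification of the shift-invariance under $h \mapsto h + p^em$ and of why the inflation factor is $(p-1)/m$ rather than its square is a slightly more detailed rendering of the same argument the paper gives.
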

\begin{proof}
First note that if an $h$ in $\set{1, \dots, p^em}$ forms part of a
two-cycle associated with $g$ and $a$, then the values in
$\set{1, \dots, p^e(p-1)}$ which do the same will be exactly those which
are congruent to $h$ modulo $p^e$ and modulo $m$, and thus modulo
$p^em$.  So each element of $T_{e,g}$ gives rise to exactly $(p-1)/m$
elements of $T_e$ in this fashion.  On the other hand, if some $a$ in
$\set{1, \dots, p^e(p-1)}$ 
forms part of a two-cycle associated with $h$ and $g$, then so will an
$a$ in $\set{1, \dots, p^em}$ which is congruent to it modulo $p^em$.
So each element of $T_{e,g}$ gives rise to only one element of $T_e$
in this fashion. Therefore we have
\begin{align*}
|T_e| &= \sum_{g \in (\Z/p^e\Z)^\times} \left(\frac{p-1}{m}\right) \left|T_{e,g}\right| \\
&=\sum_{m \mid (p-1)}\phi \left({m}\right)p^{e-1} (p-1) (m-1)/2.
\end{align*}
\end{proof}

Alternatively, we can count rooted closed walks rather than cycles, a viewpoint
which in some ways lends itself better to generalizations.

\begin{definition} For a fixed prime $p$ and for some $g \in
  \Z$, $p \nmid g$, the ordered tuple $(h_1, \ldots, h_k)$ is a \emph{rooted
    closed walk of length $k$ modulo $p^e$ associated with $g$} 
if the $k$
  equations
\begin{align*}
  g^{h_1} &\equiv h_2 \mod{p^e},\\
  g^{h_2} &\equiv h_3 \mod{p^e},\\
 &\vdots\\
 g^{h_{k-1}} &\equiv h_k \mod{p^e},\\
 g^{h_k} &\equiv h_1 \mod{p^e}
\end{align*}
are satisfied.
\end{definition}

Then Corollary~\ref{crtfixedpt} is equivalent to saying that there are
exactly $m$ rooted closed walks of length $1$ associated with $g$ in
$\{1,2, \ldots, p^em \}$, and Proposition~\ref{crt-tc} is equivalent
to saying that there are $m^2$ rooted closed walks of length $2$
associated with $g$
(including the fixed points) in $\{1,2, \ldots, p^em \}^2$.  In an
exactly parallel manner, we can prove the following generalization:

\begin{theorem} 
  For $p \not = 2$ and a fixed $g \in \Z$, $p \nmid \Z$, let $m$ be
  the multiplicative order of $g$ modulo $p$.  Then there are exactly
  $m^k$ rooted closed walks of length $k$ modulo $p^e$ associated with
  $g$ in $\{1,2, \ldots, p^em \}^k$.  Furthermore, any two of these
  rooted closed walks are distinct modulo $p^e$ and distinct modulo
  $m$.
\end{theorem}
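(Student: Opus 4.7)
The plan is to parallel the proofs of Corollary~\ref{crtfixedpt} and Proposition~\ref{crt-tc}, replacing the single equation (or pair of equations) with the system of $k$ cyclic equations. Fix a tuple $(x_{0,1}, \dots, x_{0,k}) \in (\Z/m\Z)^k$ corresponding to the residues of $h_1, \dots, h_k$ modulo $m$, and use the interpolation of Section~\ref{interpolation} to rewrite each congruence $g^{h_i} \equiv h_{i+1} \pmod{p^e}$ as the $\Zp$-equation
$$f_i(h_1,\dots,h_k) = \omega(g)^{x_{0,i}} \oneunit{g}^{h_i} - h_{i+1} = 0,$$
with indices read cyclically ($h_{k+1} = h_1$). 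Each $f_i$ is a restricted power series by the same $\exp$-$\log$ expansion used in Theorem~\ref{x0fixedpt}.

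Next, I would check the hypotheses of Proposition~\ref{hensel-system}. Since $\oneunit{g} \equiv 1 \pmod p$, the reduced system modulo $p$ is $h_{i+1} \equiv \omega(g)^{x_{0,i}} \pmod p$, which has the unique solution $h_j \equiv \omega(g)^{x_{0,j-1}} \pmod p$. For the Jacobian, $\partial f_i/\partial h_i$ contains the factor $\log \oneunit{g} \in p\Zp$ and so vanishes modulo $p$, while $\partial f_i / \partial h_{i+1} = -1$ and all remaining entries are zero. The Jacobian modulo $p$ is therefore $-1$ times a cyclic permutation matrix, with determinant $(-1)^k \cdot (\pm 1) \in \Zp^\times$. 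Proposition~\ref{hensel-system} then lifts the mod-$p$ solution to a unique $(h_1,\dots,h_k) \in \Zp^k$ solving $f_i = 0$ for all $i$.

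Just as in the proof of Corollary~\ref{crtfixedpt}, I would then apply the Chinese Remainder Theorem componentwise: for each fixed tuple $(x_{0,1},\dots,x_{0,k})$, pairing the mod-$p^e$ reduction of this $\Zp^k$-solution with the prescribed residues modulo $m$ yields a unique tuple $(h_1,\dots,h_k) \in (\Z/p^em\Z)^k$ with $h_i \equiv x_{0,i} \pmod m$. The interpolation diagram of Section~\ref{interpolation} then guarantees $g^{h_i} \equiv h_{i+1} \pmod{p^e}$. Letting $(x_{0,1},\dots,x_{0,k})$ range over $(\Z/m\Z)^k$ produces exactly $m^k$ rooted closed walks.

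For the distinctness statement, distinct tuples $(x_{0,i})$ obviously produce walks that differ modulo $m$ in at least one coordinate. For distinctness modulo $p^e$, suppose two tuples $(x_{0,i})$ and $(x_{0,i}')$ yield walks that agree componentwise modulo $p^e$. Then by Lemma~\ref{one-unit-lemma} the one-unit parts $\oneunit{g}^{h_i}$ also agree modulo $p^e$, so the equations $\omega(g)^{x_{0,i}} \oneunit{g}^{h_i} \equiv h_{i+1} \equiv \omega(g)^{x_{0,i}'} \oneunit{g}^{h_i} \pmod{p^e}$ force $\omega(g)^{x_{0,i}} \equiv \omega(g)^{x_{0,i}'} \pmod{p^e}$, and since $\omega(g)$ has order $m$, this gives $x_{0,i} \equiv x_{0,i}' \pmod m$ for each $i$. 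The main obstacle I expect is the Jacobian computation for general $k$, but its cyclic structure makes the determinant transparent, so the whole argument is essentially bookkeeping on top of Propositions~\ref{x0fixedpt}~and~\ref{x0y0tc}.
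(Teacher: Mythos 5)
Your proposal is correct and is exactly the argument the paper intends: the paper itself gives no written proof of this theorem, saying only that it follows ``in an exactly parallel manner'' to the fixed-point and two-cycle cases, and your write-up (the cyclic system of restricted power series, the Jacobian reducing mod $p$ to $-1$ times a $k$-cycle permutation matrix with unit determinant, Proposition~\ref{hensel-system}, and then the Chinese Remainder Theorem componentwise) is precisely that parallel argument carried out. The distinctness argument via Lemma~\ref{one-unit-lemma} and the injectivity of $\mu_{p-1}$ modulo $p$ is also sound.
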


\begin{remark}
  In the case where $m=p-1$, this is an equivalent statement to
  Theorem~1 of~\cite{glebsky}, where it is proved using purely
  combinatorial methods.  For general $m$, our statement implies that
  of~\cite{glebsky}.
\end{remark}

\section{Self-Power Solutions} \label{selfpower}
We now turn to the function $x \mapsto x^x \bmod{p}$, which is
sometimes known as the \emph{self-power map}.  

The proof of the following elementary lemma was essentially worked out
in Theorem~2 of~\cite{field_et_al}.

\begin{lemma} \label{x0selfpowermodp} For $p \neq 2$, let $c \in
  (\Z/p\Z)^\times$ be fixed and let $m$ be the multiplicative order of
  $c$ modulo $p$.  Also fix $x_0 \in \{0,1, \ldots, p-2 \}$.
  Then the number of solutions $x \in
  (\Z/p\Z)^\times$ to the equivalence
$$
x^{x_0} \equiv c \pmod {p}
$$
is 
$$\displaystyle \begin{cases}
 \gcd(x_0, p-1)& \text{if $ \gcd(x_0, p-1) \mid \frac{p-1}{m}$;}\\
0 & \text{otherwise.}
\end{cases}$$
\end{lemma}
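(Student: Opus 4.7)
The plan is to reduce the multiplicative equation $x^{x_0} \equiv c \pmod{p}$ to a linear congruence by taking discrete logarithms with respect to a primitive root. Fix a primitive root $g$ modulo $p$ and write $c \equiv g^k \pmod p$ for some $k$ determined modulo $p-1$. A standard fact is that the multiplicative order of $c = g^k$ equals $(p-1)/\gcd(k,p-1)$, so the hypothesis that $c$ has order $m$ translates into the clean statement $\gcd(k, p-1) = (p-1)/m$.

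Next, I would parametrize $x \in (\Z/p\Z)^\times$ as $x \equiv g^y \pmod p$ with $y \in \{0,1,\dots,p-2\}$. Under this parametrization, $x^{x_0} \equiv c \pmod p$ becomes $g^{x_0 y} \equiv g^k \pmod p$, which is equivalent to the linear congruence
\begin{equation*}
x_0 \, y \equiv k \pmod{p-1}.
\end{equation*}
This gives a bijection between solutions $x$ of the original equation and solutions $y \in \Z/(p-1)\Z$ of this linear congruence.

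I would then invoke the standard result on linear congruences: $x_0 y \equiv k \pmod{p-1}$ has a solution in $\Z/(p-1)\Z$ if and only if $\gcd(x_0, p-1) \mid k$, and in that case the number of solutions modulo $p-1$ is exactly $\gcd(x_0, p-1)$.

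The last step is to translate the divisibility condition $\gcd(x_0, p-1) \mid k$ into the form stated in the lemma. Since $d := \gcd(x_0, p-1)$ already divides $p-1$, the condition $d \mid k$ is equivalent to $d \mid \gcd(k, p-1) = (p-1)/m$. This gives exactly the criterion $\gcd(x_0, p-1) \mid (p-1)/m$, completing the count. There is no real obstacle here beyond bookkeeping; the only place to be careful is the equivalence $d \mid k \iff d \mid \gcd(k,p-1)$, which is what turns the primitive-root exponent $k$ (which is not intrinsic to $c$) into the invariant quantity $(p-1)/m$ appearing in the statement.
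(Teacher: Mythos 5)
Your proof is correct, and it reaches the statement by a slightly different route than the paper. The paper argues intrinsically in $(\Z/p\Z)^\times$: for fixed $t$ the set of $t$-th powers $P_t=\{x^t\}$ is the unique subgroup of index $\gcd(t,p-1)$, so $c$ lies in $P_{x_0}$ exactly when its order $m$ divides $|P_{x_0}|=(p-1)/\gcd(x_0,p-1)$, i.e.\ when $\gcd(x_0,p-1)\mid (p-1)/m$; and when $c$ is an $x_0$-th power it has exactly $\gcd(x_0,p-1)$ preimages under $x\mapsto x^{x_0}$ because that map is a homomorphism with kernel of that size. You instead fix a primitive root $g$, pass to exponents, and quote the solvability criterion and solution count for the linear congruence $x_0y\equiv k\pmod{p-1}$; your translation $d\mid k\iff d\mid\gcd(k,p-1)=(p-1)/m$ (valid since $d\mid p-1$) is exactly the bookkeeping needed and is handled correctly. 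The two arguments are equivalent under the discrete-log isomorphism $(\Z/p\Z)^\times\cong\Z/(p-1)\Z$: yours leans on a familiar off-the-shelf fact about linear congruences at the cost of a non-canonical choice of $g$, while the paper's version is choice-free and is the same subgroup-and-fibers argument it reuses verbatim in Lemma~\ref{number} for the two-variable collision count, so it fits the surrounding development more directly.
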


\begin{proof}
  For a fixed integer $t$ the set of $t$-th powers, $P_t=\{x^t \mid x \in
  (\Z/p\Z)^\times\}$ forms a subgroup of index $\gcd(t,p-1)$ in
  $(\Z/p\Z)^\times$. Using our set cardinality notation, we have that
  $|P_t|= (p-1)/\gcd(t,p-1)$.  If $ \gcd(x_0, p-1) \nmid
  \frac{p-1}{m}$ then $c$ is not in $P_{x_0}$, so $x^{x_0} \equiv c
  \pmod {p}$ cannot have any solutions.  Otherwise, any element of
  $P_{x_0}$ is an $x_0$-th power in exactly $ \gcd(x_0, p-1)$ ways, so
  the equivalence has exactly $ \gcd(x_0, p-1)$ solutions.
\end{proof}

\begin{corollary} \label{selfpowerS1}
For $p \neq 2$, let $c \in
  (\Z/p\Z)^\times$ be fixed and let $m$ be the multiplicative order of
  $c$ modulo $p$. Then the number of solutions $x \in
  \{1,2, \ldots, p(p-1)\}$ to the equivalence $x^x \equiv c \pmod{p}$
such that $p \nmid x$
is given by the formula:
$$
\sum_{\substack{0\le x_0 \le p-2 \\ \gcd(x_0, p-1) \mid
    \frac{p-1}{m}}} \gcd(x_0,p-1)= \sum_{d \mid \frac{p-1}{m}}
d \ \phi\left(\frac{p-1}{d}\right).$$
\end{corollary}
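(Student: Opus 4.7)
The plan is to reduce the count modulo $p(p-1)$ to a count of pairs $(x_1,x_0) \in (\Z/p\Z)^\times \times \Z/(p-1)\Z$ via the Chinese Remainder Theorem, then apply Lemma~\ref{x0selfpowermodp} term-by-term and reindex the resulting sum by the value of $\gcd(x_0,p-1)$.

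First, because $\gcd(p,p-1)=1$, the CRT gives a bijection between the set of residues $x \in \Z/p(p-1)\Z$ with $p \nmid x$ and the product $(\Z/p\Z)^\times \times \Z/(p-1)\Z$ sending $x$ to $(x \bmod p,\ x \bmod (p-1))$. Writing $x_1 \equiv x \pmod p$ and $x_0 \equiv x \pmod{p-1}$, Fermat's little theorem gives $x^x \equiv x_1^{x_0} \pmod p$ whenever $p \nmid x$. So the number of $x \in \{1,\dots,p(p-1)\}$ with $p\nmid x$ and $x^x \equiv c \pmod p$ equals
\begin{equation*}
\sum_{x_0=0}^{p-2} \#\bigl\{x_1 \in (\Z/p\Z)^\times \mid x_1^{x_0} \equiv c \pmod p\bigr\}.
\end{equation*}

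Next I would invoke Lemma~\ref{x0selfpowermodp} with the fixed exponent $x_0$: the inner cardinality equals $\gcd(x_0,p-1)$ when $\gcd(x_0,p-1)$ divides $(p-1)/m$ and is zero otherwise. This immediately gives the first equality in the claim.

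Finally I would reindex by $d=\gcd(x_0,p-1)$. For each divisor $d$ of $p-1$, the number of residues $x_0 \in \{0,1,\dots,p-2\}$ with $\gcd(x_0,p-1)=d$ is $\phi((p-1)/d)$ (the count of elements of exact order $(p-1)/d$ in the cyclic group $\Z/(p-1)\Z$). The condition $\gcd(x_0,p-1)\mid (p-1)/m$ then translates into $d\mid(p-1)/m$, yielding
\begin{equation*}
\sum_{\substack{0 \le x_0 \le p-2 \\ \gcd(x_0,p-1)\mid (p-1)/m}} \gcd(x_0,p-1) \;=\; \sum_{d\mid (p-1)/m} d\,\phi\!\left(\tfrac{p-1}{d}\right),
\end{equation*}
which is the desired formula. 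The only delicate point is the reindexing step, and even that is a standard totient identity; no real obstacle is expected beyond keeping the divisibility condition $d\mid (p-1)/m$ intact through the change of variables.
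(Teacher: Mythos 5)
Your proof is correct and is exactly the argument the paper intends: the corollary is stated without proof immediately after Lemma~\ref{x0selfpowermodp}, and the first expression in its statement is precisely the sum over $x_0$ of the lemma's count, obtained via the CRT splitting $x \mapsto (x \bmod p,\ x \bmod (p-1))$ and Fermat's little theorem, followed by the standard reindexing by $d=\gcd(x_0,p-1)$. Nothing is missing.
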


\begin{proposition} \label{x0spN1} 
For $p \neq 2$, let $c \in
  \Zp^\times$ be fixed and let $m$ be the multiplicative order of
  $c$ modulo $p$. 
Then for fixed $x_0 \in \Z/{(p-1)}\Z$, 
the number of solutions to the equation
$$\omega(x)^{x_0} \oneunit{x}^x =c$$
 for $x \in \Zp^\times$ 
is
$$\begin{cases}
\gcd(x_0,p-1)& \text{if $\gcd(x_0,p-1) \mid \frac{p-1}{m}$;}\\
0 & \text{otherwise.}
\end{cases}$$
\end{proposition}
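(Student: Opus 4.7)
The plan is to follow the same two-step pattern used in Theorem~\ref{x0fixedpt}: first count solutions modulo $p$ via Lemma~\ref{x0selfpowermodp}, then use Hensel's Lemma (Corollary~\ref{hensel-one}) to lift each mod-$p$ solution uniquely to $\Zp^\times$. Since $\Zp^\times = \bigsqcup_{a \in \mu_{p-1}}(a + p\Zp)$, solutions can be organized by their Teichm\"uller representative.

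Reducing $\omega(x)^{x_0}\oneunit{x}^x = c$ modulo $p$ gives $x^{x_0} \equiv c \pmod p$, since $\oneunit{x} \equiv 1 \pmod p$ and $\omega(x) \equiv x \pmod p$. By Lemma~\ref{x0selfpowermodp} this has $\gcd(x_0,p-1)$ solutions in $(\Z/p\Z)^\times$ when $\gcd(x_0,p-1)\mid (p-1)/m$ and none otherwise, and each mod-$p$ solution corresponds to a unique Teichm\"uller representative $a \in \mu_{p-1}$ satisfying $a^{x_0} = \omega(c)$. For each such $a$, I would parametrize $x = a(1+pz)$ with $z \in \Zp$, so that $\omega(x) = a$ and $\oneunit{x} = 1 + pz$. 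The Teichm\"uller components of the equation agree by the choice of $a$, and the one-unit components reduce to $(1+pz)^{a(1+pz)} = \oneunit{c}$. Taking $p$-adic logarithms and then dividing by $p$ (both sides of $a(1+pz)\log(1+pz) = \log\oneunit{c}$ lie in $p\Zp$), the equation becomes $H(z) = 0$ where
\begin{equation*}
H(z) \;=\; \frac{a(1+pz)\log(1+pz)}{p} \;-\; \frac{\log\oneunit{c}}{p}.
\end{equation*}
Since $\log(1+pz)/p = z - pz^2/2 + p^2z^3/3 - \cdots$ lies in $\Zp[[z]]$, the function $H$ is a restricted power series; modulo $p$ it is $az - (\log\oneunit{c})/p$, so it has the unique root $z_0 \equiv a^{-1}(\log\oneunit{c})/p \pmod p$, and $H'(z) = a\log(1+pz) + a \equiv a \pmod p$ is a unit. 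Corollary~\ref{hensel-one} then provides a unique $z \in \Zp$ with $z \equiv z_0 \pmod p$ and $H(z) = 0$, and hence a unique solution $x = a(1+pz)$ in the ball $a + p\Zp$.

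The main technical hurdle is arranging for the Hensel input to be a restricted power series whose derivative at the mod-$p$ root is a $p$-adic unit. A direct substitution $x = a + p\xi$ in $\omega(x)^{x_0}\oneunit{x}^x - c$ introduces an extra factor of $p$ in the $\xi$-derivative at $\xi = 0$ and thus fails the Hensel hypothesis; the remedy is to pass to logarithms and divide by $p$, which is legitimate precisely because both $a(1+pz)\log(1+pz)$ and $\log\oneunit{c}$ lie in $p\Zp$. Once this is done, summing the contribution $1$ from each of the $\gcd(x_0,p-1)$ valid Teichm\"uller representatives (and $0$ when the divisibility condition fails) gives the stated count.
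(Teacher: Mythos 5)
Your proof is correct, and it reaches the stated count by the same overall strategy as the paper (reduce modulo $p$, count with Lemma~\ref{x0selfpowermodp}, lift each mod-$p$ solution uniquely by Hensel), but the lifting step is executed differently. The paper keeps $x$ itself as the variable: it writes $f(x)=\omega(x)^{x_0}\oneunit{x}^x-c$, uses the local constancy of $\omega$ to compute $\frac{df}{dx}=\omega(x)^{x_0}[\log\oneunit{x}+1]\equiv\omega(x)^{x_0}\pmod p$, observes this is a unit, and applies Corollary~\ref{hensel-one} directly. That is why the ``technical hurdle'' you describe does not arise there: the derivative only acquires the extra factor of $p$ after you recenter to $\xi$ (or $z$) via $x=a+p\xi$, since $\frac{d}{d\xi}=p\frac{d}{dx}$. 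Your remedy --- splitting off the Teichm\"uller component, passing to logarithms on the one-unit part, and dividing by $p$ to get $H(z)$ with $H'\equiv a\pmod p$ --- is sound and has a genuine advantage: $H$ is literally a restricted power series in $z$ on all of $\Zp$, whereas the paper's $f$ is only a coset-by-coset analytic function of $x$ (the expansion of $\log(x/a)$ about $a$ converges only on $a+p\Zp$ and $\omega$ is merely locally constant), so your version fits the hypotheses of Corollary~\ref{hensel-one} more squarely. The trade-off is that the paper's computation is shorter and makes the unit Jacobian visible at a glance, which is the pattern it reuses for the two-variable collision count in Proposition~\ref{x0y0collNe}. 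Both arguments then conclude identically by summing $1$ over the $\gcd(x_0,p-1)$ admissible Teichm\"uller representatives, with the divisibility condition $\gcd(x_0,p-1)\mid\frac{p-1}{m}$ governing whether any exist.
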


\begin{proof} For a fixed $x_0$, we consider the function
$$f(x)=\omega(x)^{x_0} \oneunit{x}^x - c$$
and look for solutions 
$x \in \Zp^\times$ to $f(x) \equiv 0 \pmod{p}$. Since we know that $\oneunit{x}$
is in $1+p\Zp$, we have that
\begin{eqnarray*}
\oneunit{x}^x=\exp(x \log(\oneunit{x}))=1&+&x\log(\oneunit{x})+
x^2\log( \oneunit{x})^2/2! \\
&+& \mbox{higher order terms in powers of } x\log(\oneunit{x})
\end{eqnarray*}
where from the definition of the $p$-adic logarithm we know that 
$\log(\oneunit{x}) \in p\Zp$.
Now if we consider the power 
series representation of $f(x)$, we see that 
\begin{eqnarray*} \label{sppowsercoll}
f(x)= \omega(x)^{x_0} - c &+& \omega(x)^{x_0} x \log(\oneunit{x}) \\
&+& \mbox{ higher order terms in } p^2\Zp.
\end{eqnarray*}

Since $\omega$ is constant on each of the $p-1$ disjoint cosets of
$p\Zp$ that cover $\Zp^\times$ or see  
\cite[Prop.2, Section IV.2]{koblitz}, we have that
$$
\frac{df}{dx} = \omega(x)^{x_0} [ \log(\oneunit{x}) +1] \equiv
\omega(x)^{x_0} \pmod p 
$$
since $\log(\oneunit{x}) \in p\Zp$. As $\omega(x)^{x_0} \not \equiv 0
\pmod p$, we have by Corollary~\ref{hensel-one} that
the number of solutions in $\Zp$ is the same as the number of solutions in
Lemma~\ref{x0selfpowermodp}.
\end{proof}

\begin{corollary} \label{selfpowerSe}
For $p \neq 2$, let $c \in
  \Zp^\times$ be fixed and let $m$ be the multiplicative order of
  $c$ modulo $p$.   Then the number of solutions to the
  congruence
\begin{equation}
x^x \equiv c \pmod{p^e} \tag{\ref{spe}, recalled}
\end{equation}
for $x$ such that $x \in \{1,2,\ldots p^e(p-1)\}$, $p\nmid x$,
is given by the formula:
$$
\sum_{\substack{0\le x_0 \le p-2 \\ \gcd(x_0, p-1) \mid
    \frac{p-1}{m}}} \gcd(x_0,p-1)= \sum_{d \mid \frac{p-1}{m}}
d \ \phi\left(\frac{p-1}{d}\right).$$
\end{corollary}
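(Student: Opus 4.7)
The plan is to mirror the proofs of Corollary~\ref{crtfixedpt} and Proposition~\ref{crt-tc}: sum the $\Zp^\times$-count supplied by Proposition~\ref{x0spN1} over all residues $x_0 \in \Z/(p-1)\Z$, and then reassemble the pieces via the Chinese Remainder Theorem into elements of $\Z/p^e(p-1)\Z$. The reason for working modulo $p-1$ rather than $m$ is that here $x$ plays the role of the base in the interpolation of Section~\ref{interpolation}, and an arbitrary element of $\Zp^\times$ satisfies $x^{p-1} \equiv 1 \pmod{p}$, so we take $p-1$ in the place of $m$ in that setup.

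For each fixed $x_0 \in \{0, 1, \ldots, p-2\}$, Proposition~\ref{x0spN1} supplies either $\gcd(x_0, p-1)$ solutions $x \in \Zp^\times$ to
$$\omega(x)^{x_0}\oneunit{x}^x = c$$
when $\gcd(x_0, p-1) \mid (p-1)/m$, or no solutions otherwise. Since these $p$-adic solutions are obtained via Corollary~\ref{hensel-one} as the unique lifts of the distinct mod-$p$ solutions identified in Lemma~\ref{x0selfpowermodp}, their reductions modulo $p^e$ remain distinct. Thus for each $x_0$ we obtain precisely $\gcd(x_0, p-1)$ (or zero) distinct classes $x_1 \in (\Z/p^e\Z)^\times$ satisfying $\omega(x)^{x_0}\oneunit{x}^{x_1} \equiv c \pmod{p^e}$.

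Because $\gcd(p-1, p^e) = 1$, the Chinese Remainder Theorem gives a bijection between pairs $(x_0, x_1) \in \Z/(p-1)\Z \times (\Z/p^e\Z)^\times$ and elements $x \in \{1, \ldots, p^e(p-1)\}$ with $p \nmid x$. For $x$ corresponding to $(x_0, x_1)$, the interpolation identity $x^x \equiv \omega(x)^{x_0}\oneunit{x}^x \pmod{p^e}$ from Section~\ref{interpolation} shows that $x$ solves~\eqref{spe} if and only if the pair $(x_0, x_1)$ was counted in the previous step. Summing over $x_0$ yields
$$\#\{x\} \;=\; \sum_{\substack{0 \le x_0 \le p-2 \\ \gcd(x_0, p-1)\,\mid\,(p-1)/m}} \gcd(x_0, p-1),$$
and the closed form $\sum_{d \mid (p-1)/m} d\,\phi((p-1)/d)$ then follows by grouping $x_0$ according to the value $d = \gcd(x_0, p-1)$, exactly as already recorded in Corollary~\ref{selfpowerS1}.

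The main thing to check carefully is the two-sided matching between the interpolated equation and the original congruence: on the one hand, an honest solution $x$ of~\eqref{spe} with $x \equiv x_0 \pmod{p-1}$ and $x \equiv x_1 \pmod{p^e}$ is accounted for because $x^x \equiv \omega(x)^{x_0}\oneunit{x}^{x_1} \pmod{p^e}$ and this class must coincide with the reduction of one of the $\Zp^\times$-solutions furnished by Proposition~\ref{x0spN1}; on the other, each such $\Zp^\times$-solution reduces to a genuine mod-$p^e$ solution whose CRT-recombination with $x_0$ lies in $\{1, \ldots, p^e(p-1)\}$. Once this bookkeeping is in place the count is immediate, and no further Hensel lifting is required beyond what Proposition~\ref{x0spN1} has already supplied.
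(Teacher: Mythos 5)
Your proposal is correct and follows essentially the same route as the paper: the paper's own proof is the one-line remark that the argument is parallel to Corollary~\ref{crtfixedpt}, namely apply Proposition~\ref{x0spN1} for each $x_0$, reduce the resulting $\Zp^\times$-solutions modulo $p^e$, and recombine with $x_0$ via the Chinese Remainder Theorem, with $p-1$ playing the role of $m$ since the base now varies over all of $\Zp^\times$. Your write-up simply makes that parallel explicit (including the distinctness of reductions and the two-sided matching), so there is nothing to correct.
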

\begin{proof} The proof is parallel to that of Corollary~\ref{crtfixedpt}.
\end{proof}

\section{Collisions} \label{collisions}

\begin{definition}
The set of solutions $(h,a)$, where $h$ and $a \in \{1,2,\ldots
p(p-1)\} $, $p \nmid h$ and $p\nmid a$, to the equivalence 
$$
h^h \equiv a^a \pmod p
$$
will be denoted $C_1$ for \emph{collisions} and we will use the
notation $|C_1|$ for the number of such collisions.  More generally,
we will use the notation $|C_e|$ to denote the number of
\emph{collisions} $(h,a)$, where $h$ and $a \in \{1,2,\ldots
p^e(p-1)\}$, $p \nmid h$ and $p\nmid a$, which are solutions to the
equivalence
$$
h^h \equiv a^a \pmod {p^e}.
$$ 
\end{definition}

Recall that $\bar{x}$ indicates reduction of $x$ to the appropriate
residue class.

\begin{lemma} \label{number} For fixed $x_0$ and $y_0 \in \{0,1,
  \ldots, p-2 \}$, if 
$$N_1^\times=\{({x},{y}) \in
  ((\Z/p\Z)^\times)^2 \mid x^{x_0}-y^{y_0} = 0 \text{\ in\ } \Z/p\Z \},$$
  then $$|N_1^\times|= (p-1)\gcd(x_0,y_0,p-1).$$
\end{lemma}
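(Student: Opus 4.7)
The plan is to transfer the problem from the multiplicative group $(\Z/p\Z)^\times$ to the additive cyclic group $\Z/(p-1)\Z$ by fixing a primitive root $g$ modulo $p$. Writing $x = g^a$ and $y = g^b$ with $a, b \in \Z/(p-1)\Z$ sets up a bijection $((\Z/p\Z)^\times)^2 \leftrightarrow (\Z/(p-1)\Z)^2$, under which the defining equation $x^{x_0} = y^{y_0}$ in $\Z/p\Z$ is equivalent, since $g^{p-1} \equiv 1 \pmod p$, to the single linear congruence
$$a x_0 \equiv b y_0 \pmod{p-1}.$$
So it suffices to count the solutions $(a,b) \in (\Z/(p-1)\Z)^2$ of this congruence.

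Next I would introduce the group homomorphism
$$\phi \colon (\Z/(p-1)\Z)^2 \longrightarrow \Z/(p-1)\Z, \qquad (a,b) \mapsto a x_0 - b y_0,$$
so that $N_1^\times$ is in bijection with $\ker \phi$. The image of $\phi$ is the subgroup of $\Z/(p-1)\Z$ generated by the classes of $x_0$ and $y_0$, which coincides with the cyclic subgroup generated by $d := \gcd(x_0, y_0, p-1)$ and therefore has order $(p-1)/d$. The first isomorphism theorem then gives
$$|N_1^\times| = |\ker \phi| = \frac{(p-1)^2}{(p-1)/d} = (p-1)\, d,$$
which is the claimed formula.

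There is no serious technical obstacle; the argument is a routine counting exercise once the problem has been translated into the language of the discrete logarithm. The only minor point worth noting in the write-up is the degenerate cases $x_0 = 0$ or $y_0 = 0$: with the usual convention $\gcd(0, n) = n$ (so that $\gcd(0, 0, p-1) = p-1$), the formula remains correct and matches a direct count — for instance, when $x_0 = y_0 = 0$ every pair is a solution, giving $(p-1)^2$, and when $x_0 = 0$ with $y_0 \neq 0$ the variable $a$ is free while $b$ ranges over the $\gcd(y_0, p-1)$ solutions of $b y_0 \equiv 0 \pmod{p-1}$.
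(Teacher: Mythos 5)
Your proof is correct, but it takes a different route from the paper's. The paper works intrinsically in the multiplicative group: it introduces the power subgroups $P_t=\{x^t\}$, notes that the common value $x^{x_0}=y^{y_0}$ must lie in the intersection $\mathfrak{I}=P_{x_0}\cap P_{y_0}$ (a subgroup of order $(p-1)\gcd(x_0,y_0,p-1)/(\gcd(x_0,p-1)\gcd(y_0,p-1))$), and then counts $N_1^\times$ by summing the fiber sizes $\gcd(x_0,p-1)\cdot\gcd(y_0,p-1)$ over $i\in\mathfrak{I}$. You instead pass to the additive picture via a primitive root and compute the kernel of the linear map $(a,b)\mapsto ax_0-by_0$ on $(\Z/(p-1)\Z)^2$ using the first isomorphism theorem; your image is (the discrete-log avatar of) the product subgroup $P_{x_0}P_{y_0}$ rather than the intersection, so the two arguments are dual to one another via $|P_{x_0}||P_{y_0}|=|P_{x_0}\cap P_{y_0}|\,|P_{x_0}P_{y_0}|$. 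Your version handles the degenerate cases $x_0=0$ or $y_0=0$ uniformly and is arguably cleaner; the paper's version avoids choosing a primitive root and keeps the fiber-counting structure that is reused elsewhere (e.g.\ in Lemma~\ref{x0selfpowermodp}). Both are complete and correct.
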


\begin{proof}
For a fixed integer $t$ the set of $t$-th powers, $P_t=\{x^t \mid x \in (\Z/p\Z)^\times\}$ forms a subgroup 
of index $\gcd(t,p-1)$ in $(\Z/p\Z)^\times$. Using our set cardinality notation, we have that
$|P_t|= (p-1)/\gcd(t,p-1)$.
Let $\mathfrak{I}=P_{x_0} \bigcap P_{y_0}$, then $\mathfrak{I}$ is a subgroup of order 

$$
|\mathfrak{I}|=\gcd(|P_{x_0}|, |P_{y_0}|)={ \frac {(p-1) \gcd(x_0,y_0,p-1) }  {\gcd(x_0,p-1) \gcd(y_0,p-1)}}.
$$

Now, we need to count all $(x,y) \in ((\Z/p\Z)^\times)^2$ such that $x^{x_0} \equiv y^{y_0} \pmod p$. 
If $x^{x_0} \equiv y^{y_0} \pmod p$ then $x^{x_0}$ and $y^{y_0} $ are in the set $\mathfrak{I}$ above.
Thus, we have that 
$$|N_1^\times|= \sum_{ i \in \mathfrak{I}} 
\left|\{ x \in (\Z/p\Z)^\times \mid x^{x_0} \equiv i \pmod p\}\right| \ \cdot \ 
\left|\{ y \in(\Z/p\Z)^\times \mid y^{y_0}\equiv i \pmod p\} \right| . $$ 
For each $i \in \mathfrak{I}$, $ |\{ x \in (\Z/p\Z)^\times \mid x^{x_0}
\equiv i \pmod p\} |=\gcd(x_0,p-1)$. So that
$$|N_1^\times|=| \mathfrak{I}| \cdot \gcd(x_0,p-1) \cdot \gcd(y_0,p-1)=(p-1)\gcd(x_0,y_0,p-1).$$
\end{proof}

\begin{proposition} \label{x0y0collN1} For $p \neq 2$ and for fixed $x_0$ and  $y_0 \in \Z/{(p-1)}\Z$, 
if we consider the function
$f(h,a)=\omega(h)^{x_0} \oneunit{h}^h -  \omega(a)^{y_0}
\oneunit{a}^a$ for $h, a \in \Zp^\times$ and let
$$|N_1^\times|=\left| \{(\bar{h},\bar{a}) \in ((\Zp/p\Zp)^\times)^2 \mid f(h,a)
  \equiv 0 \pmod{p}\} \right|,$$ then

$$|N_1^\times|=(p-1)\gcd(x_0,y_0,p-1)$$
\end{proposition}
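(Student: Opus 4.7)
The plan is to reduce the congruence $f(h,a) \equiv 0 \pmod{p}$ to the equation $h^{x_0} \equiv a^{y_0} \pmod{p}$ already analyzed in Lemma~\ref{number}, and then quote that lemma.

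First I would observe that $\oneunit{h}, \oneunit{a} \in 1 + p\Zp$ by construction of the decomposition $\Zp^\times \cong \mu_{p-1} \times (1+p\Zp)$, and therefore $\oneunit{h}^h \equiv 1 \pmod{p}$ and $\oneunit{a}^a \equiv 1 \pmod{p}$ (this is the $k=1$ case of Lemma~\ref{one-unit-lemma}, but in fact already follows from $\oneunit{h} \equiv 1 \pmod{p}$). Substituting into the definition of $f$ gives
\[
f(h,a) \equiv \omega(h)^{x_0} - \omega(a)^{y_0} \pmod{p}.
\]
Next, the defining property of the Teichm\"uller character is that $\omega(h)$ is the unique $(p-1)$-st root of unity in $\Zp$ with $\omega(h) \equiv h \pmod{p}$, and similarly for $a$. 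Hence
\[
f(h,a) \equiv h^{x_0} - a^{y_0} \pmod{p}.
\]

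It follows that the set $N_1^\times$ in the statement is exactly the set of $(\bar h, \bar a) \in ((\Z/p\Z)^\times)^2$ satisfying $h^{x_0} \equiv a^{y_0} \pmod{p}$. Lemma~\ref{number} then gives $|N_1^\times| = (p-1)\gcd(x_0, y_0, p-1)$, which is precisely the claim.

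There is no real obstacle here; the argument is just a clean mod-$p$ reduction. The only thing to be careful about is invoking the correct characterizations of $\omega$ and $\oneunit{\cdot}$ so that the interpolated function collapses to an ordinary power congruence modulo $p$. Once that collapse is in place, the combinatorial count is already done by Lemma~\ref{number}, and the proposition follows immediately. This sets us up for the next stage, where the Jacobian of $f$ will need to be examined in order to lift these mod-$p$ solutions via Proposition~\ref{hensel-count}.
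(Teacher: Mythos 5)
Your proof is correct and follows essentially the same route as the paper: both reduce $f(h,a)$ modulo $p$ to $\omega(h)^{x_0}-\omega(a)^{y_0}$ (the paper via the explicit $\exp(h\log\oneunit{h})$ expansion showing the remaining terms lie in $p\Zp$, you via the equivalent observation that $\oneunit{h}^h\in 1+p\Zp$) and then invoke Lemma~\ref{number}. Your explicit use of $\omega(h)\equiv h \pmod p$ to pass from the Teichm\"uller expression to the ordinary power congruence is a step the paper leaves implicit; just be aware that since $h\in\Zp$ the power $\oneunit{h}^h$ is defined via $\exp(h\log\oneunut{h})$, so the congruence $\oneunit{h}^h\equiv 1\pmod p$ really rests on $\log\oneunit{h}\in p\Zp$ rather than literally on Lemma~\ref{one-unit-lemma} for integer exponents.
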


\begin{proof} For a fixed $x_0$ and $y_0$, we look for solutions 
$h, a \in \Zp^\times$ to $f(h,a) \equiv 0 \bmod p$. Since we know that $\oneunit{h}$
and $\oneunit{a}$ are elements in $1+p\Zp$, we have that
\begin{eqnarray*}
\oneunit{h}^h=\exp(h \log(\oneunit{h}))=1&+&h\log(\oneunit{h})+
h^2\log( \oneunit{h})^2/2! \\
&+& \mbox{higher order terms in powers of } h\log(\oneunit{h})
\end{eqnarray*}
where from the definition of the $p$-adic logarithm we know that 
$\log(\oneunit{h}) \in p\Zp$.
Now if we consider the number of solutions $|N_1^\times|$ using the power 
series representation of $f(h,a)$, we see that 
\begin{eqnarray} \label{powsercoll}
f(h,a)= \omega(h)^{x_0} -  \omega(a)^{y_0} + \mbox{ higher order terms in } p\Zp.
\end{eqnarray}
In this way, we see that 
$$|N_1^\times|=
| \{(\bar{h},\bar{a}) \in ((\Z/p\Z)^\times)^2 \mid
\omega(h)^{x_0}-\omega(a)^{y_0} \equiv 0 \pmod p\}|.
$$
From this expression and Lemma~\ref{number}, we have that
$$
|{N}_1^\times|= (p-1)\gcd(x_0,y_0,p-1).$$
\end{proof}

\begin{corollary} \label{collisionC1}
For $p \neq 2$, the number of collisions $(h, a)$
  for $h$ and $a \in \{1,2, \ldots, p(p-1)\}$ such that $p \nmid h$,
  $p \nmid a$, and $h^h \equiv a^a \pmod{p}$ is given by the formula:
$$
|\Nha_1|= \sum_{0\le x_0,y_0 \le p-2} (p-1) \gcd(x_0,y_0,p-1)=
(p-1) \sum_{d \mid p-1} d  \  J_2((p-1)/d)$$
where $J_2 (n) =n^2 \prod_{p \mid n} (1-p^{-2})$ is Jordan's totient
function, which counts the number of pairs of positive integers all
less than or equal to $n$ that form a mutually coprime triple together
with $n$. 
\end{corollary}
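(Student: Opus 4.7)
The plan is to combine Proposition~\ref{x0y0collN1} with the interpolation machinery of Section~\ref{interpolation} exactly as was done for fixed points in Corollary~\ref{crtfixedpt}, and then convert the resulting double sum into the Jordan-totient form by a standard Dirichlet-convolution identity.

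First I would fix $(x_0, y_0) \in (\Z/(p-1)\Z)^2$ and use the interpolation diagram (for the self-power map one applies it to both variables with $m = p-1$): if $h \equiv x_0 \pmod{p-1}$ and $a \equiv y_0 \pmod{p-1}$, then $h^h \equiv \omega(h)^{x_0}\oneunit{h}^h \pmod{p}$ and $a^a \equiv \omega(a)^{y_0}\oneunit{a}^a \pmod{p}$. Thus the congruence $h^h \equiv a^a \pmod{p}$ becomes $f(h,a) \equiv 0 \pmod{p}$ for the $f$ of Proposition~\ref{x0y0collN1}. By CRT, a pair $(h,a) \in \{1,\ldots,p(p-1)\}^2$ (with $p\nmid h$, $p\nmid a$) is uniquely determined by the pair $(\bar h, \bar a) \in ((\Z/p\Z)^\times)^2$ together with the residues $(x_0, y_0) \in (\Z/(p-1)\Z)^2$. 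Consequently, for each $(x_0,y_0)$ the number of contributing pairs is precisely $|N_1^\times| = (p-1)\gcd(x_0,y_0,p-1)$ from Proposition~\ref{x0y0collN1}, and summing over all choices of $(x_0,y_0)$ yields
\begin{equation*}
|\Nha_1| = \sum_{0 \le x_0, y_0 \le p-2} (p-1)\gcd(x_0,y_0,p-1).
\end{equation*}

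Next I would rewrite this sum. Using the identity $n = \sum_{d \mid n}\phi(d)$ applied to $n = \gcd(x_0,y_0,p-1)$, and interchanging summation,
\begin{equation*}
\sum_{x_0,y_0} \gcd(x_0,y_0,p-1) = \sum_{d \mid p-1} \phi(d) \left(\frac{p-1}{d}\right)^{\!2}.
\end{equation*}
To bring this into the claimed shape I would invoke the standard identity $\sum_{d \mid n} J_2(d) = n^2$, i.e.\ $J_2 = \mu * \mathrm{sq}$ in Dirichlet convolution, together with $\phi = \mu * \mathrm{id}$. Then both $\phi * \mathrm{sq}$ and $\mathrm{id} * J_2$ equal $\mu * \mathrm{id} * \mathrm{sq}$, so
\begin{equation*}
\sum_{d \mid p-1} \phi(d)\left(\frac{p-1}{d}\right)^{\!2} = \sum_{d \mid p-1} d\, J_2\!\left(\frac{p-1}{d}\right),
\end{equation*}
completing the identification.

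The main obstacle is not conceptual but bookkeeping: one must verify that the diagram of Section~\ref{interpolation} applies simultaneously in the two variables $h$ and $a$ (with $m=p-1$ in each), so that \emph{every} pair $(h,a) \in \{1,\ldots,p(p-1)\}^2$ with $p\nmid h$, $p\nmid a$ is counted exactly once as $(x_0,y_0)$ ranges over $(\Z/(p-1)\Z)^2$ and $(\bar h, \bar a)$ over $((\Z/p\Z)^\times)^2$. Once that is in hand, the rest is the Dirichlet identity above.
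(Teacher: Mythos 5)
Your proposal is correct and follows essentially the same route the paper intends: the corollary is obtained by summing the count $(p-1)\gcd(x_0,y_0,p-1)$ of Proposition~\ref{x0y0collN1} over all $(x_0,y_0)$, with the Chinese Remainder Theorem matching each pair of residues $(\bar h,\bar a)$ and congruence classes $(x_0,y_0)$ to a unique pair in $\{1,\ldots,p(p-1)\}^2$, exactly as in Corollary~\ref{crtfixedpt}. Your Dirichlet-convolution verification of the identity $\sum_{x_0,y_0}\gcd(x_0,y_0,p-1)=\sum_{d\mid p-1} d\,J_2((p-1)/d)$ is a correct filling-in of a step the paper leaves implicit.
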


\begin{proposition} \label{x0y0collNe}For $p \neq 2$ and for fixed $x_0$ and  $y_0 \in \Z/{(p-1)}\Z$, 
if we consider the function
$f(h,a)=\omega(h)^{x_0} \oneunit{h}^h -  \omega(a)^{y_0} \oneunit{a}^a$ 
for $h, a \in \Zp^\times$ and let
$$N_e^\times=\{(\bar{h},\bar{a}) \in ((\Zp/p^e\Zp)^\times)^2  \mid f(h,a) \equiv 0 \pmod {p^e} \},$$ then
$$|N_e^\times|=
p^{e-1}|N_1^\times|.$$
\end{proposition}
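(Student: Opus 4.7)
The plan is to apply Proposition~\ref{hensel-count} with $n=2$ to the function $f(h,a)=\omega(h)^{x_0}\oneunit{h}^h-\omega(a)^{y_0}\oneunit{a}^a$, which will yield the recurrence $|N_{e+1}^\times|=p\,|N_e^\times|$ for $e\ge 1$; induction on $e$ then produces $|N_e^\times|=p^{e-1}|N_1^\times|$.

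First I would deal with the fact that $f$ is not globally a restricted power series on $\Zp^\times\times\Zp^\times$, since $\omega$ is only locally constant. The fix is to decompose the domain into the $(p-1)^2$ pairs of residue classes $(\bar h_0,\bar a_0)\in((\Z/p\Z)^\times)^2$; on each such pair the Teichm\"uller values $\omega(h_0)$ and $\omega(a_0)$ are fixed roots of unity, and the expansion $\oneunit{h}^h=\exp(h\log\oneunit{h})$ from the proof of Proposition~\ref{x0y0collN1} exhibits the corresponding restriction of $f$ as a restricted power series in the sense of the paper. Every solution counted in $N_e^\times$ lies in exactly one such residue pair, and Hensel lifts of units remain units, so the counts split cleanly into contributions from each piece and the unit condition is automatically preserved under lifting.

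Next I would verify the Jacobian hypothesis of Proposition~\ref{hensel-count}. On a fixed residue class, $\oneunit{h}=h/\omega(h)$ with $\omega(h)$ constant, so a direct computation gives
$$\frac{\partial f}{\partial h}=\omega(h)^{x_0}\oneunit{h}^h\bigl(\log\oneunit{h}+1\bigr).$$
Since $\oneunit{h}\equiv 1\pmod p$ and $\log\oneunit{h}\in p\Zp$, this reduces modulo $p$ to $\omega(h)^{x_0}$, which is a unit in $\Zp$ because $\omega(h)\in\mu_{p-1}\subset\Zp^\times$. Thus the partial derivative condition holds at every point of $N_e^\times$, and Proposition~\ref{hensel-count} applies on each residue-class piece to give the recurrence, completing the induction.

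The main obstacle is really just the bookkeeping: Proposition~\ref{hensel-count} is stated for a single restricted power series on all of $\Zp^n$, so one must check carefully that decomposing by residue class mod~$p$ is compatible with its hypotheses and that the contributions from the $(p-1)^2$ pieces sum to $|N_e^\times|$. The derivative computation itself is routine once one treats $\oneunit{h}^h$ as $\exp(h\log\oneunit{h})$ within a fixed residue class rather than trying to apply a naive $h^h$ rule.
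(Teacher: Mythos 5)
Your proposal is correct and follows essentially the same route as the paper: expand $\oneunit{h}^h=\exp(h\log\oneunit{h})$ as a restricted power series on each residue class where $\omega$ is constant, check that $\partial f/\partial h\equiv\omega(h)^{x_0}\pmod p$ is a unit, and invoke Proposition~\ref{hensel-count} with $n=2$ to get $|N_e^\times|=p\,|N_{e-1}^\times|$. Your treatment is in fact slightly more careful than the paper's in making the residue-class decomposition explicit and in retaining the factor $\oneunit{h}^h$ in the derivative (which the paper drops, harmlessly, since it is $\equiv 1\pmod p$).
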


\begin{proof}
Looking more carefully at our series representation for 
$f(h,a)$ in Equation~\ref{powsercoll} from Proposition~\ref{x0y0collN1}, we have that
\begin{eqnarray*} \label{powsercoll2}
f(h,a)= \omega(h)^{x_0} -  \omega(a)^{y_0} & + & \omega(h)^{x_0} h \log(\oneunit{h}) - \omega(a)^{y_0} a \log(\oneunit{a}) \\
&+& \mbox{ higher order terms in } p^2\Zp.
\end{eqnarray*}
Since $\omega$ is constant on each of the $p-1$ disjoint cosets of $p\Zp$ that cover $\Zp^\times$ or see 
\cite[Prop.2, Section IV.2]{koblitz}, we have that
$$
{\partial f \over \partial h}= \omega(h)^{x_0} [ \log(\oneunit{h}) +1] \equiv  \omega(h)^{x_0} \pmod p
$$
since $\log(\oneunit{h}) \in p\Zp$. As $\omega(h)^{x_0} \not \equiv 0
\pmod p$, we have by Proposition~\ref{hensel-count} with $n=2$ that
$$
|{N}_e^\times|=
p|N_{e-1}^\times|.
$$
for $e >1$, and our Proposition follows.
\end{proof}

\begin{corollary} \label{collisionCe}
For $p \neq 2$, there are exactly $| \Nha_e|=p^{e-1} |{\Nha}_1|$ collisions that are 
solutions to the congruence
\begin{equation}
h^h \equiv a^a \pmod{p^e} \tag{\ref{hae}, recalled}
\end{equation}
for $(h, a)$ such that $h$ and $a \in \{1,2,\ldots p^e(p-1)\}$, $p\nmid h$, $p\nmid a$.
\end{corollary}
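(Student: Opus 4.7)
The plan is to mirror the CRT-plus-lifting argument used in Corollary~\ref{crtfixedpt} and Corollary~\ref{selfpowerSe}, promoting Proposition~\ref{x0y0collNe} from a count of lifts in $((\Zp/p^e\Zp)^\times)^2$ for a single pair $(x_0,y_0)$ to a count of honest collisions in $\{1,\dots,p^e(p-1)\}^2$ summed over all residues $(x_0,y_0)$.

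First I would set up the correspondence. For each fixed $(x_0,y_0) \in (\Z/(p-1)\Z)^2$, Proposition~\ref{x0y0collNe} tells us that the number $|N_e^\times(x_0,y_0)|$ of pairs $(\bar h,\bar a)\in((\Zp/p^e\Zp)^\times)^2$ satisfying
\[
\omega(h)^{x_0}\oneunit{h}^h \equiv \omega(a)^{y_0}\oneunit{a}^a \pmod{p^e}
\]
equals $p^{e-1}|N_1^\times(x_0,y_0)|$. Since $\gcd(p^e,p-1)=1$, the Chinese Remainder Theorem gives a bijection between $(\Z/p^e\Z)^\times \times \Z/(p-1)\Z$ and the units of $\Z/p^e(p-1)\Z$, represented canonically by $\{1,\dots,p^e(p-1)\}\setminus p\Z$. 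Under this bijection, a pair $(\bar h,\bar a,x_0,y_0)$ corresponds to a unique $(h,a)\in\{1,\dots,p^e(p-1)\}^2$ with $p\nmid h$, $p\nmid a$, satisfying $h\equiv x_0 \pmod{p-1}$ and $a\equiv y_0\pmod{p-1}$.

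Next I would check that under this correspondence the interpolation equation matches the actual collision equation. If $h\equiv x_0 \pmod{p-1}$, then $\omega(h)^h=\omega(h)^{x_0}$ (because $\omega(h)$ lies in $\mu_{p-1}$), so
\[
h^h = \omega(h)^h\oneunit{h}^h = \omega(h)^{x_0}\oneunit{h}^h \pmod{p^e},
\]
and likewise $a^a\equiv\omega(a)^{y_0}\oneunit{a}^a\pmod{p^e}$. Thus $h^h\equiv a^a\pmod{p^e}$ if and only if the interpolated equation $f(h,a)\equiv 0\pmod{p^e}$ holds, and the correspondence is a bijection between $C_e$ and the disjoint union over $(x_0,y_0)$ of the sets counted by $|N_e^\times(x_0,y_0)|$.

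Finally I would sum and invoke the $e=1$ count:
\[
|C_e| = \sum_{(x_0,y_0)\in(\Z/(p-1)\Z)^2} |N_e^\times(x_0,y_0)|
     = p^{e-1}\sum_{(x_0,y_0)}|N_1^\times(x_0,y_0)|
     = p^{e-1}|C_1|,
\]
where the last equality is exactly Corollary~\ref{collisionC1} viewed as the same sum. The main obstacle is bookkeeping rather than mathematics: one must carefully verify that the CRT identification is compatible with the interpolation (that the $\omega(h)^h=\omega(h)^{x_0}$ step legitimately converts $h^h$ into the restricted power series $f$), and that no double-counting occurs when $(x_0,y_0)$ varies, so that the sum really is disjoint. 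Once that identification is clean, the multiplicative constant $p^{e-1}$ is a direct consequence of Proposition~\ref{x0y0collNe}.
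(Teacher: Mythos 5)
Your proposal is correct and is exactly the argument the paper intends: its own proof of Corollary~\ref{collisionCe} is the single line ``parallel to that of Corollary~\ref{crtfixedpt},'' meaning precisely the CRT decomposition of $\{1,\dots,p^e(p-1)\}\setminus p\Z$ into $(\Z/p^e\Z)^\times\times\Z/(p-1)\Z$, the identification $h^h=\omega(h)^{x_0}\oneunit{h}^h$ for $h\equiv x_0\pmod{p-1}$, and the sum of Proposition~\ref{x0y0collNe} over all $(x_0,y_0)$. Your write-up fills in the details the paper leaves implicit, and all the steps (well-definedness, disjointness, and matching the $e=1$ sum with Corollary~\ref{collisionC1}) check out.
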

\begin{proof} The proof is parallel to that of Corollary~\ref{crtfixedpt}.
\end{proof}

\begin{remark}
Note that Corollaries~\ref{collisionC1} and~\ref{collisionCe} could
also have been proved by squaring the results of
Corollaries~\ref{selfpowerS1} and~\ref{selfpowerSe}, respectively, and
summing over all $c$.
\end{remark}

\section{Conclusions and Future Work}

Previous work on solutions to~\eqref{fp} and related equations has
focused on finding how primitive roots modulo $p$, or specified powers
of primitive roots, are distributed in arithmetic progressions
contained in $\set{1, \ldots, p}$ with differences dividing $p-1$.  We
hope that this paper shows that another course might also be fruitful:
start with the solutions to an exponential equation which are in
$\set{1, \ldots, p(p-1)}$ (or $\set{1 \ldots, p^e(p-1)}$) and
determine how they are distributed among the subintervals of length
$p$ (or $p^e$).  Furthermore, we think the use of $p$-adic numbers
also suggests new lines of attack that may be useful in the future.
For example, the ability to extend the $p$-adic exponential function
to rings of integers in extension fields of $\Qp$ might provide a
useful way of looking at, or even posing, new problems in finite field
extensions of $\Z/p\Z$.

In future extensions of this work we hope to consider solutions of
more exponential equations, including the equation 
\begin{equation} 
    h^{h/d} \equiv a^{a/d} \mod{p^e}, \quad d = \gcd(h, a, p-1)
\end{equation}
considered (with $e=1$) in~\cite{holden_moree} as closely related to~\eqref{tc}.
Another problem that should be tractable using our
methods is finding solutions of
\begin{equation}
g^{x-1+c} \equiv x \mod{p^e}
\end{equation}
for $c$ fixed.  This was raised in~\cite{drakakis} (with $e=1$) as
related to ``Golumb rulers'', which have applications in error
correction and in controlling the effects of electromagnetic signals
interference.  Finally, one could consider the ``discrete Lambert''
map $x \mapsto xg^x$ for $g$ fixed, which is related to the standard
ElGamal signature scheme and the Digital Signature Algorithm in a
similar fashion to the way the self-power function is related to its
variants.  Then one could ask for solutions of
\begin{equation}
xg^x \equiv c \mod{p^e}
\end{equation}
for fixed $c$, or collisions of the discrete Lambert map, namely
solutions of 
\begin{equation}
    hg^{h} \equiv ag^{a} \mod{p^e} .
\end{equation}

Finally, for completeness one should investigate the situation when
$p=2$.  Counting solutions modulo $p$ is trivial in this case, but the
$p$-adic situation is slightly more complicated than in the $p \neq 2$
case.

\begin{bibdiv}
\begin{biblist}

\bib{balog_et_al}{article}{
	title = {On the Number of Solutions of Exponential Congruences},
	author = {Antal Balog},
        author = {Kevin A. Broughan},
        author = {Igor E. Shparlinski},
	volume = {148},
	url = {http://journals.impan.gov.pl/aa/Inf/148-1-7.html},
	doi = {10.4064/aa148-1-7},
	number = {1},
	journal = {Acta Arithmetica},
	year = {2011},
	pages = {93--103}
}

\bib{bourbaki}{book}{
	edition = {1},
	title = {Commutative Algebra: Chapters 1-7},
	shorttitle = {Commutative Algebra},
	publisher = {Addison-Wesley},
	author = {Nicolas Bourbaki},
	year = {1972}
}

\bib{bourgain_et_al_1}{article}{
	title = {Product Sets of Rationals, Multiplicative Translates
          of Subgroups in Residue Rings, and Fixed Points of the
          Discrete Logarithm}, 
	volume = {2008},
	url = {http://imrn.oxfordjournals.org/cgi/content/abstract/2008/rnn090/rnn090},
	doi = {10.1093/imrn/rnn090},
	number = {rnn090},
	journal = {Int Math Res Notices},
	author = {Jean Bourgain},
        author = {Sergei V. Konyagin},
        author = {Igor E. Shparlinski},
	month = {aug},
	year = {2008},
	pages = {29 pages},
}

\bib{bourgain_et_al_2}{article}{
	title = {Distribution on Elements of Cosets of Small Subgroups
          and Applications}, 
        eprint = {arXiv:1103.0567},
	url = {http://arxiv.org/abs/1103.0567},
	author = {Jean Bourgain},
        author = {Sergei V. Konyagin},
        author = {Igor E. Shparlinski},
	year = {to appear},
journal =  {Int Math Res Notices},
}

\bib{campbell-thesis}{thesis}{
      author={Campbell, Mariana},
       title={On Fixed Points for Discrete Logarithms},
	type={Master's Thesis},
	date={2003},
}

\bib{cobeli-zaharescu}{article}{
      author={Cobeli, Cristian},
      author={Zaharescu, Alexandru},
       title={An Exponential Congruence with Solutions in Primitive Roots},
	date={1999},
	ISSN={0035-3965},
     journal={Rev. Roumaine Math. Pures Appl.},
      volume={44},
      number={1},
       pages={15\ndash 22},
      review={\MR{2002d:11005}},
}

\bib{drakakis}{article}{
	title = {Three Challenges in {C}ostas Arrays},
	volume = {89},
	issn = {0381-7032},
	journal = {Ars Combinatoria},
	author = {Konstantinos Drakakis},
	year = {2008},
	pages = {167--182}
}

\bib{field_et_al}{report}{
	title = {The {I}gusa Local Zeta Function for $X^m + Y^n$},
	institution = {Mount Holyoke College},
	author = {Rebecca Field},
        author = {Vibhavaree Gargeya},
        author =  {Margaret M. Robinson},
        author = {Frederic Schoenberg},
        author = {Ralph Scott},
	month = {feb},
	year = {1994},
        url = {http://www.mtholyoke.edu/~robinson/reu/reu92/reu92.htm},
        eprint = {http://www.mtholyoke.edu/~robinson/reu/reu92/reu92.htm}
}

\bib{glebsky}{article}{
	title = {Cycles in Repeated Exponentiation Modulo $p^n$},
	url = {http://arxiv.org/abs/1006.2500},
        eprint = {arXiv:1006.2500},
	author = {Lev Glebsky},
	month = {jun},
	year = {2010},
}

\bib{glebsky_shparlinski}{article}{
	title = {Short Cycles in Repeated Exponentiation Modulo a Prime},
	url = {http://dx.doi.org/10.1007/s10623-009-9339-2},
	doi = {10.1007/s10623-009-9339-2},
	journal = {Designs, Codes and Cryptography},
	author = {Lev Glebsky},
        author = {Igor Shparlinski},
	month = {oct},
	year = {2009}
}

\bib{gouvea}{book}{
	edition = {2},
	title = {p-adic Numbers: An Introduction},
	isbn = {3540629114},
	shorttitle = {p-adic Numbers},
	publisher = {Springer},
	author = {Fernando Quadros Gouvea},
	month = {jul},
	year = {1997}
}

\bib{guy}{book}{
      author={Guy, Richard~K.},
       title={Unsolved Problems in Number Theory},
   publisher={Springer-Verlag},
	date={2004},
        edition={3}
}

\bib{holden02}{inproceedings}{
      author={Holden, Joshua},
       title={Fixed Points and Two-Cycles of the Discrete Logarithm},
	date={2002},
   booktitle={Algorithmic number theory ({A}{N}{T}{S} 2002)},
      editor={Fieker, Claus},
      editor={Kohel, David~R.},
      series={LNCS},
   publisher={Springer},
       pages={405\ndash 415},
  url={http://link.springer-ny.com/link/service/series/0558/bibs/2369/23690405%
.htm},
}

\bib{holden02a}{misc}{
      author={Holden, Joshua},
       title={Addenda/corrigenda: Fixed Points and Two-Cycles of the Discrete
  Logarithm},
	date={2002},
        eprint = {arXiv:math/020802 [math.NT]}
	note={Unpublished, available at
          \url{http://xxx.lanl.gov/abs/math.NT/0208028}}, 
}

\bib{holden_moree}{article}{
	title = {Some Heuristics and Results for Small Cycles of the
          Discrete Logarithm}, 
	volume = {75},
	issn = {0025-5718},
	number = {253},
	journal = {Mathematics of Computation},
	author = {Joshua Holden},
        author = {Pieter Moree},
	year = {2006},
	pages = {419--449}
}

\bib{igusa-forms}{book}{
	edition = {1},
	title = {Lectures on Forms of Higher Degree},
	isbn = {3540089446},
	publisher = {Springer},
	author = {{J.-I.} Igusa},
	month = {oct},
	year = {1979}
}

\bib{koblitz}{book}{
	edition = {2nd },
	title = {$p$-adic Numbers, $p$-adic Analysis, and
          {Zeta-Functions}},
        series = {Graduate Texts in Mathematics},
	isbn = {0387960171},
	publisher = {Springer},
	author = {Neal Koblitz},
	month = {jul},
	year = {1984}
}

\bib{handbook}{book}{
	title = {Handbook of Applied Cryptography},
	isbn = {0849385237},
	url = {http://www.cacr.math.uwaterloo.ca/hac/},
	publisher = {{CRC}},
	author = {Alfred J. Menezes},
        author = {Paul C. van Oorschot},
        author = {Scott A. Vanstone},
	month = {oct},
	year = {1996}
}

\bib{levin_et_al}{inproceedings}{
	series = {Lecture Notes in Computer Science},
	title = {Fixed Points for Discrete Logarithms},
	volume = {6197},
	url = {http://libproxy.rose-hulman.edu:2208/content/872604v2n4657371/},
	doi = {10.1007/978-3-642-14518-6\_5},
	booktitle = {Algorithmic Number Theory},
	publisher = {Springer},
	author = {M. Levin},
        author = {C. Pomerance},
        author = {K. Soundarajan},
	year = {2010},
	pages = {6--15}
}

\bib{zhang}{article}{
      author={Zhang, Wen~Peng},
       title={On a Problem of {B}rizolis},
	date={1995},
	ISSN={1008-5513},
     journal={Pure Appl. Math.},
      volume={11},
      number={suppl.},
       pages={1\ndash 3},
      review={\MR{98d:11099}},
}

\end{biblist}
\end{bibdiv}

\end{document}